\documentclass[12pt]{article}
\usepackage{graphicx} 
\usepackage{hyperref}
\usepackage{amsmath,amssymb,amsfonts,amsthm,tikz,xspace,fullpage}
\usepackage{authblk}
\usetikzlibrary{calc}
\usetikzlibrary{arrows.meta}

\title{On the Burning Game: Nordhaus–Gaddum Bounds and Graph Products}
\author[1]{Nina Chiarelli}
\author[2,4]{Vesna Iršič Chenoweth}
\author[3,4]{Marko Jakovac}
\author[5]{William B. Kinnersley}
\author[6]{Mirjana Mikala\v{c}ki} 
\affil[1]{FAMNIT and IAM, University of Primorska, Slovenia}
\affil[2]{Faculty of Mathematics and Physics, University Ljubljana, Slovenia}
\affil[3]{Faculty of Natural Sciences and Mathematics, University of Maribor, Slovenia}
\affil[4]{Institute of Mathematics, Physics and Mechanics, Ljubljana, Slovenia}
\affil[5]{Department of Mathematics and Applied Mathematical Sciences, University of Rhode Island, USA}
\affil[6]{Department of Mathematics and Informatics, Faculty of Sciences, University of Novi Sad, Serbia}

\date{}

\newtheorem{theorem}{Theorem}
\newtheorem{conjecture}[theorem]{Conjecture}
\newtheorem{corollary}[theorem]{Corollary}
\newtheorem{lemma}[theorem]{Lemma}

\newtheorem{problem}[theorem]{Problem}

\newtheorem{proposition}[theorem]{Proposition}

\theoremstyle{definition}
\newtheorem{example}[theorem]{Example}

\newcommand{\bg}{b_{\rm g}}
\newcommand{\ceil}[1]{\left\lceil #1 \right\rceil}
\newcommand{\floor}[1]{\left\lfloor #1 \right\rfloor}

\newcommand{\cart}{\, \Box \,}
\newcommand{\strongprod}{\, \boxtimes \,}
\newcommand{\fastplayer}{Burner\xspace}
\newcommand{\slowplayer}{Staller\xspace}
\DeclareMathOperator{\rad}{rad}
\DeclareMathOperator{\diam}{diam}
\DeclareMathOperator{\CL}{CL}

\begin{document}

 \maketitle

\begin{abstract}
    We continue research on the burning game on graphs. Given a graph $G$, two players, \fastplayer and \slowplayer, take turns in selecting vertices of $G$ to burn. All burned vertices spread fire to unburned neighboring vertices, as in the burning process. The goal of \fastplayer is to burn the graph as quickly as possible, while \slowplayer wants the process to last as long as possible. If both players play optimally, then the number of time steps needed to burn the whole graph $G$ is the game burning number $\bg(G)$ if \fastplayer makes the first move, and the \slowplayer-start game burning number $\bg'(G)$ if \slowplayer starts.

    In this paper, we study this game further, establishing Nordhaus-Gaddum bounds on the game burning number,
    as well as bounds for four different types of graph products: strong, Cartesian, lexicographic and corona products.

\end{abstract}

\section{Introduction}

The burning process was introduced by Bonato et al.~in~\cite{bonato2016burn} with the aim of modeling the spread of viruses in both medical and computer networks and also the spread of various trends and influences over all types of networks, which was previously substantially researched (see, e.g.~\cite{banerjee2014epidemic, kephart1992directed, kramer2014experimental, rogers2014diffusion}). As a dual of the burning process, the \textit{cooling process} was introduced recently in~\cite{bonato2024cool} by Bonato et al., modeling the diminution of propagation in networks. 

In both burning and cooling processes, the setup is the same (see~\cite{bonato2021burning, bonato2022invitation, bonato2016burn, bonato2024cool}): they are discrete-time processes on a given finite, simple, undirected graph $G$, in which each vertex can be in one of the two states - burning or unburned. Initially, at time step $t = 0$, all vertices are unburned. 
In each subsequent time step -- called \emph{a round} -- every burning vertex spreads fire to all of its neighbors (causing them to burn), after which one of the unburned vertices (if there is one) is selected to burn; we call this vertex the \emph{source} for the round. Once burning, a vertex stays in this state until the end of the game. 
The process ends when all vertices in the graph are burned. 
The main difference between burning and cooling processes is in the goal of the process. In the burning process, the aim is that the graph burns as quickly as possible, i.e.\ to minimize the number of rounds, while in the cooling process, the aim is to maximize the number of rounds. The minimum number of rounds needed for the burning process to finish on a graph $G$ is the \emph{burning number}, denoted by $b(G)$; the maximum number of rounds 
is called \emph{the cooling number} and denoted by $\CL(G)$. For all graphs $G$, it holds that $b(G)\leq \CL(G)$. 

Motivated by both processes, the authors recently introduced 
the \textit{burning game} (see~\cite{burning-1}) wherein the two players, \fastplayer (he/him) and \slowplayer (she/her) take turns selecting 
the sources in each round of the burning process.  More formally, the game is defined as follows. In a given simple, finite graph $G$, at the beginning of the game, all vertices are unburned. This is the time step $t=0$, i.e. the round $0$. In all other time steps $t\geq 1$, first, in the \textit{spreading phase}, all unburned neighbors of burning vertices begin to burn, and then, in the \textit{selection phase}, one of the players chooses an unburned vertex to burn. 
So, each round consists of these two phases, but note 
that it may happen that the last round does not have the selection phase (since there may not be any remaining unburned vertices). The game ends in the first round $t$ in which all vertices of $G$ are burning. The aim of \fastplayer is to minimize the total number of time steps and the aim of \slowplayer is to maximize it. If both players play optimally, then the number of rounds needed to burn the whole graph $G$ is called the \emph{game burning number} $\bg(G)$ if \fastplayer makes the first move, and the \emph{\slowplayer-start game burning number} $\bg'(G)$ if \slowplayer starts. As mentioned in~\cite{burning-1}, $b(G) = \bg(G)$ when \fastplayer is the only player (with possible slight difference in the last round, as in the game, the spread comes before selection, but in the original burning process, the two may happen simultaneously), while $\CL(G) = \bg(G)$ when \slowplayer is the only player. 

Graph burning has attracted a lot of attention recently, and much of it has been focused on resolving the so-called \textit{burning number conjecture} posed by Bonato et al.\ in \cite{bonato2016burn}, which asserts that every $n$-vertex connected graph $G$ satisfies $b(G) \le \ceil{\sqrt{n}}$. There has been a substantial improvement of the upper bounds on $b(G)$ over time (see e.g. \cite{BBBCKPR23,BBJRR18,LL16,NT24}), and also knowing that if the burning conjecture holds for trees, then it holds for all connected graphs, the central point of the research is exactly the class of trees, i.e. some of the families of trees (\cite{BL19,DDSSS18,HTK20,LHH20}).  

In~\cite{burning-1}, basic bounds on $b_{\rm g}(G)$ are given and several fundamental properties of the burning game are established. Graphs with small game burning numbers are characterized and the game is studied on paths and cycles. An analogue of the burning number conjecture for the burning game is also considered. Finally, it is shown that the problem of determining whether or not $b_{\rm g}(G) \le k$ is NP-hard.  

In this paper, we continue the study of this game, 
establishing Nordhaus-Gaddum bounds on the game burning number, i.e. bounds on the sum and product of the game burning number and the \slowplayer-start game burning number of the given graph and its complement. We further look at several different types of graph products -- Cartesian, strong, lexicographic, and corona products -- and establish bounds for both $\bg$ and $\bg'$.

\subsection{Organization of the paper} The remainder of the paper is organized as follows. In the following subsection, we state some notation that will be used throughout the rest of the paper. Then in Section~\ref{prel}, we recall some results from~\cite{burning-1} that are relevant for our new results further on. In Section~\ref{sec:nordhaus-gaddum}, we establish lower and upper bounds on the sum and product of $\bg$ and $\bg'$ for a given graph and its complement. In Section~\ref{sec:products}, we give bounds on $\bg$ and $\bg'$ for four types of graph products: Cartesian, strong, lexicographic and corona products. Finally, we conclude our paper with some open questions in  Section~\ref{sec:questions}.
 
\subsection{Notation} 
We use standard graph theory notation throughout. For a given graph $G$, we denote its vertex set and edge set by $V(G)$ and $E(G)$, respectively, and we set $v(G) = |V(G)|$ and $e(G) = |E(G)|$.  An edge joining two vertices $x$ and $y$ is denoted by $xy$.  The complement of the graph is denoted by $\overline{G}$.
Given graphs $H, F$, we write $H\subseteq F$ to denote that $H$ is contained in $F$, meaning that $V(H)\subseteq V(F)$ and $E(H)\subseteq E(F)$. 
Given any set $S\subseteq V(G)$, we denote the (joint) \textit{external neighborhood} of $S$ by $N_G(S)=\{u \in V(G)\setminus S : \exists x \in S \ \textrm{such that} \ ux \in E(G)\}$; when the graph $G$ is clear from context, we simply write $N(S)$. The \textit{closed neighborhood} of $S$ in $G$ is $N_G(S) \cup S$, and is denoted by $N_G[S]$ (or just $N[S]$).  When $S$ consists of only one vertex, $x$, we write $N_G(x)$ (or $N(x)$) for the external neighborhood and $N_G[x]$ (or $N[x]$) for the closed neighborhood. We let $d_G(x) = |N_G(x)|$ denote the degree of vertex $x$ in graph $G$; once again, when $G$ is clear from context, we simply write $d(x)$. The maximum and minimum vertex degree in a given graph $G$ are denoted by $\Delta(G)$ and $\delta(G)$, respectively. %
For a vertex $v$ and a non-negative integer $k$, the \textit{$k$th closed neighborhood} $N_k[v]$ of $v$ is defined as the set of all vertices within distance $k$ of $v$, including $v$ itself. 
The distance between vertices $u$ and $v$ is denoted by $d(u, v)$. If $G$ is a graph and $u$ is a vertex of $G$, then the \textit{eccentricity} of $u$ is defined as $ecc(u)= \max\{d(u,v): v\in V(G)\}$. The \textit{radius} and \textit{diameter} of $G$ are defined as the minimum and maximum eccentricities, respectively, over all vertices in $G$.
We denote with $G^2$ the \textit{square} of a graph $G$, i.e., the vertex set of $G^2$ is $V(G^2)=V(G)$ and two distinct vertices $u,v \in V(G^2)$ are adjacent if and only if $d(u,v) \le 2$.

\subsection{Preliminaries}
\label{prel}
In this section we list several results about the burning game from \cite{burning-1} as they are needed later in the paper.

The following proposition appears in \cite{burning-1}; we have provided the proof here because we use some of the ideas later in the paper.

\begin{proposition}[{\cite[Proposition 1]{burning-1}}]
    \label{prop:trivial-bounds}
    If $G$ is a connected graph, then $b(G) \leq \bg(G) \leq \min\{ \CL(G), 2 b(G^2) - 1\}$ and $b(G) \leq \bg'(G) \leq \min\{ \CL(G), 2 b(G^2)\}$.
\end{proposition}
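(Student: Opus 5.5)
The plan is to prove the four inequalities separately, treating each as a strategy-stealing or strategy-transfer argument. Two conventions will be used throughout. First, in any play of the game, every round with a selection phase adds exactly one source, regardless of which player chose it. Second, the final configuration depends only on the sequence of sources and their rounds; who chose them is irrelevant.

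\emph{Lower bounds.} Any complete play of the game, whichever player starts, produces a sequence of sources that burns $G$ within the number of rounds played. This sequence is therefore a valid burning sequence for the burning process, with the caveat about the last round mentioned in the introduction. Hence the game lasts at least $b(G)$ rounds no matter how the players move, which gives $b(G)\le \bg(G)$ and $b(G)\le\bg'(G)$.

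\emph{Upper bound via $\CL(G)$.} Any play of the game is also a play of the cooling process in which all sources are chosen by one agent. Its length is therefore at most the maximum possible length, $\CL(G)$. So no matter how Burner plays, the game ends by round $\CL(G)$, and both $\bg(G)$ and $\bg'(G)$ are at most $\CL(G)$.

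\emph{Upper bound via $G^2$.} This is the main step. Let $k=b(G^2)$ and fix an optimal burning sequence $x_1,\dots,x_k$ in $G^2$. This means every vertex $v$ satisfies $d_{G^2}(v,x_i)\le k-i$ for some $i$, so $d_G(v,x_i)\le 2(k-i)$. Burner's strategy is to make his $i$th move at $x_i$ if it is still unburned, and otherwise at any unburned vertex.

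When Burner starts, his $i$th move occurs in round $2i-1$. If $x_i$ was already burning by then, it burned in some round $\le 2i-1$. In either case, by round $2k-1$ the fire from $x_i$ has had at least $(2k-1)-(2i-1)=2(k-i)$ rounds to spread. Hence every vertex within distance $2(k-i)$ of $x_i$ burns by round $2k-1$. This gives $\bg(G)\le 2b(G^2)-1$.

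When Staller starts, Burner's $i$th move occurs in round $2i$, and the same computation gives that everything burns by round $2k$, so $\bg'(G)\le 2b(G^2)$.

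The obstacle here is making the timing convention precise. In a round, spreading precedes selection, so a vertex selected in round $r$ reaches distance $j$ by round $r+j$. One must also treat the case where the game ends early, before Burner gets to play $x_i$; that is harmless because the game is then already over sooner.

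Combining the two upper bounds with the lower bound gives the proposition.
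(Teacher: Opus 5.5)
Your proposal is correct and follows essentially the same argument as the paper. The lower bound and the $\CL(G)$ bound come from viewing any play of the game as a run of the burning (respectively cooling) process, and the $2b(G^2)-1$ and $2b(G^2)$ bounds come from Burner playing an optimal burning sequence $x_1,\dots,x_k$ of $G^2$ on his turns, using $d_G(v,x_i)\le 2(k-i)$ and the fact that $x_i$ is burning by round $2i-1$ (respectively $2i$); your explicit handling of the spread-before-selection timing and of early termination is simply a more careful rendering of the same proof.
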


\begin{proof}
    Regardless of who plays first, let $x_1, x_2, \ldots, x_k$ be a sequence of moves in the burning game. Selecting $x_1, x_2, \ldots$ as sources in the graph burns the whole graph in $\bg(G)$ number of rounds, thus $\bg(G) \geq b(G)$; similarly $\bg(G) \leq \CL(G)$. 

    To see that $\bg(G) \leq 2 b(G^2) - 1$, consider the following strategy for Burner. Let $b(G^2) = k$ and let $x_1, \dots, x_k$ be an optimal sequence of sources for the burning process on $G^2$. Burner's strategy in the burning game is to play vertex $x_i$ on his $i$th turn or, if $x_i$ is already burned, to play any unburned vertex. By choice of the $x_i$, every vertex $v$ in $G$ must be within distance $k-i$ of some $x_i$ in $G^2$, thus $v$ must be within distance $2(k-i)$ of some $x_i$ in $G$.  Burner's strategy ensures that vertex $x_i$ is burned no later than round $2i-1$; since the fire will reach $v$ within the next $2(k-i)$ rounds of the game, $v$ will burn no later than round $2 k - 1$.

    A similar argument shows that $\bg'(G) \le 2b(G^2)$; the only change is that Burner plays $x_i$ in round $2i$ (provided that it is not already burned by then).
\end{proof}

\begin{proposition}[{\cite[Proposition 2]{burning-1}}]
    \label{prop:radius}
    If $G$ is a connected graph, then $\bg(G) \leq \rad(G) + 1$ and $\bg'(G) \leq \min\{\rad(G) + 2, \diam(G)+1\}$.
\end{proposition}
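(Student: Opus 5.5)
Each of the three bounds comes from a simple explicit strategy for \fastplayer, analysed by distances, as in the proof of Proposition~\ref{prop:trivial-bounds}. The facts used are that once a vertex burns, fire reaches every vertex at distance $d$ from it within $d$ further rounds, and that extra sources only speed the process up.

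For $\bg(G) \le \rad(G)+1$, let $c$ be a center of $G$, so $ecc(c) = \rad(G) = r$. In round~1 \fastplayer selects $c$; since nothing is burning yet, $c$ is unburned and the move is legal. Every vertex $v$ has $d(c,v) \le r$, so $v$ burns by round $1 + d(c,v) \le r+1$, whatever either player does later. We need $G$ connected so that all distances are finite. If $G$ is a single vertex, the game ends in round~1 and the bound holds trivially.

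For $\bg'(G) \le \rad(G)+2$, \slowplayer moves in round~1 and then \fastplayer moves in round~2.
\begin{itemize}
\item If $c$ is still unburned in round~2, \fastplayer selects it, and every vertex burns by round $2 + r$.
\item If $c$ is already burned by round~2, then $c$ was burning by the end of round~2 at the latest. Each vertex is within distance $r$ of $c$, so it burns by round $2+r$.
\end{itemize}
Either way the game ends by round $r+2$. If the graph is fully burned before \fastplayer's turn, the bound holds anyway.

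For $\bg'(G) \le \diam(G)+1$, ignore \fastplayer's moves entirely. In round~1 \slowplayer must select some vertex $x$, since there are unburned vertices (assume $v(G) \ge 1$). Every vertex is within distance $\diam(G)$ of $x$, so it burns by round $1 + \diam(G)$. Taking the minimum of the last two bounds gives $\bg'(G) \le \min\{\rad(G)+2, \diam(G)+1\}$.

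There is no real obstacle here. The only care needed is the round bookkeeping: the spreading phase precedes the selection phase, so a source chosen in round $t$ burns its distance-$d$ vertices exactly in round $t+d$. We should also check the degenerate cases where the game ends before a player's intended move, which only help \fastplayer.
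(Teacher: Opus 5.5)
Your proof is correct. The paper only cites this result from \cite{burning-1} and does not reproduce a proof, but your argument is the standard one and matches the distance-counting style of the paper's proof of Proposition~\ref{prop:trivial-bounds}: \fastplayer burns a central vertex on his first turn (round~1 or round~2), and \slowplayer's forced first source reaches every vertex within $\diam(G)$ further rounds. One wording fix: a source chosen in round $t$ burns its distance-$d$ vertices \emph{by} round $t+d$, not necessarily exactly then, since other sources may reach them earlier.
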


\begin{proposition}[{\cite[Proposition 3]{burning-1}}]
    \label{prop:Delta}
    Let $G$ be a graph on $n$ vertices. If $\Delta(G) \leq n-2$, then $\bg(G) \leq n - \Delta(G)$, and if $\Delta(G) \leq n-3$, then $\bg'(G) \leq n - \Delta(G)$.
\end{proposition}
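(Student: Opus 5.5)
The plan is a counting argument. \fastplayer's strategy is to get a vertex $v$ of maximum degree burning as early as possible. Once $v$ burns, the next spreading phase burns all of $N[v]$, that is, $\Delta(G)+1$ vertices. From then on, every selection phase burns at least one new vertex, and spreading never decreases the burned set. So it suffices to bound the number of burned vertices after each round.

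For $\bg(G)$, \fastplayer plays $v$ in round $1$.
\begin{itemize}
\item In round $2$, the spreading phase burns $N[v]$, so at least $\Delta(G)+1$ vertices are burning.
\item Since $\Delta(G)\le n-2$, some vertex is still unburned, so a selection takes place and at least $\Delta(G)+2$ vertices burn by the end of round $2$.
\item Inductively, by the end of each round $t\ge 2$, at least $\min\{n,\Delta(G)+t\}$ vertices are burning: each round either finishes the graph or contributes at least one selected vertex.
\end{itemize}
Taking $t=n-\Delta(G)\ge 2$ shows the whole graph is burning by round $n-\Delta(G)$. This argument does not need connectivity.

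For $\bg'(G)$, \slowplayer first plays some vertex $x$, and I aim to show that at least $\min\{n,\Delta(G)+t\}$ vertices are burning after each round $t\ge 3$. This is the main obstacle: one round is lost to \slowplayer, so \fastplayer must recover one vertex. The target round $3$ is meaningful because $\Delta(G)\le n-3$ gives $n-\Delta(G)\ge 3$. I would split into cases according to the relation between $x$ and $v$.

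\textbf{Case 1: $x\notin N[v]$.} \fastplayer plays $v$ in round $2$ (it is not burned by spreading in round $2$, since it is not adjacent to $x$). In round $3$, the spreading phase burns $N[v]\cup\{x\}$, which is at least $\Delta(G)+2$ vertices. A selection then gives $\Delta(G)+3$ unless the graph is already burned.

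\textbf{Case 2: $x\in N(v)$.} The spreading phase of round $2$ already burns $v$. \fastplayer then selects in round $2$ an unburned vertex $y\notin N[v]$.
\begin{itemize}
\item If no such $y$ exists, then every vertex outside $N[v]$ is already burning. In round $3$, $N[v]$ also burns, so the whole graph is burning.
\item Otherwise, after the spreading phase of round $3$, the burned set contains $N[v]\cup\{y\}$, which has at least $\Delta(G)+2$ vertices. The selection in round $3$ brings this to $\Delta(G)+3$.
\end{itemize}

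\textbf{Case 3: $x=v$.} The spreading phase of round $2$ burns $N[v]$. \fastplayer's selection in round $2$ gives at least $\Delta(G)+2$ burned vertices, and round $3$ adds at least one more.

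In every case, at least $\min\{n,\Delta(G)+3\}$ vertices are burning after round $3$. The same monotone counting as before then gives at least $\min\{n,\Delta(G)+t\}$ burned vertices after round $t\ge 3$. Hence the graph is fully burned by round $n-\Delta(G)$.
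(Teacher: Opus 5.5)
Your proof is correct and is essentially the standard counting argument behind this result, which the paper cites from its companion paper without reproducing a proof: \fastplayer gets a maximum-degree vertex $v$ burning so that all of $N[v]$ is burning by round $2$ (Burner-start) or round $3$ (Staller-start), after which each round adds at least one newly selected vertex. Your three-case analysis of \slowplayer's opening move is sound. The delicate case is Case 2 ($x \in N(v)$), and you handle it correctly by having \fastplayer select a vertex outside $N[v]$, so that his round-$2$ move is not absorbed by the fire spreading from $v$.
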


\begin{lemma}[{\cite[Lemma 8]{burning-1}}]
    \label{lem:spanning-subgraph}
    If $H$ is a spanning subgraph of $G$, then $\bg(G) \leq \bg(H)$ and $\bg'(G) \le \bg'(H)$.  
\end{lemma}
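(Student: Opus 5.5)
The plan is a strategy-stealing (imagination) argument. Burner plays the game on $G$ while imagining a parallel game on $H$. We use the same vertex set for both and maintain, after every round, the invariant that every vertex burned in the imagined game on $H$ is also burned in the real game on $G$. Burner will follow an optimal strategy for the game on $H$, which burns $H$ within $\bg(H)$ rounds (respectively $\bg'(H)$ rounds in the Staller-start version). If the invariant holds throughout, then the real game on $G$ ends no later than the imagined game, since the set of burned vertices in $G$ is always a superset of the set in $H$. This gives $\bg(G)\le \bg(H)$ and $\bg'(G)\le\bg'(H)$; the two cases are identical except for who moves first.

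To maintain the invariant, we examine a single round. First consider the spreading phase. Let $B_G$ and $B_H$ denote the burned sets before spreading, with $B_H\subseteq B_G$ by the invariant. A vertex newly burned in $H$ is adjacent in $H$ to some vertex of $B_H$. Since $E(H)\subseteq E(G)$, it is adjacent in $G$ to the same vertex, which lies in $B_G$. Hence it burns in $G$ as well, and the inclusion survives spreading.

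Next consider the selection phase, where we copy moves between the games.

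When Burner moves, he consults his optimal $H$-strategy to obtain a vertex $x$ that is unburned in $H$. If $x$ is unburned in $G$, he plays $x$ in $G$ too. Otherwise $x$ is already burned in $G$, and he plays an arbitrary unburned vertex of $G$ (if one exists). In both cases the inclusion is preserved.

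When Staller plays $y$ in $G$, we must supply a move for Staller in the imagined $H$-game. If $y$ is unburned in $H$, we let Staller play $y$ in $H$. If $y$ is already burned in $H$, then $y$ is burned in $G$ too, which is impossible because Staller just selected it as unburned. So this case never arises.

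There are edge cases where the $H$-game has nothing to select but $G$ does; these are harmless. One subtlety is when the imagined $H$-game has no unburned vertex while $G$ still does. That cannot happen, because $H$ being fully burned forces $G$ to be fully burned, and then both games are over.

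The main obstacle is the step where Burner's intended vertex is already burned in $G$. There he makes an extra, arbitrary selection in $G$ that does not appear in the $H$-game. This only enlarges $B_G$, so it is harmless for the superset invariant, and no matching extra move is needed in $H$. Round numbering stays synchronized because both games advance one round at a time. Consequently the imagined game on $H$, played optimally by Burner against some sequence of Staller moves, ends within $\bg(H)$ (or $\bg'(H)$) rounds, and at that point $G$ is fully burned.
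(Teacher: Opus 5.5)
Your proof is correct and follows essentially the same approach the paper takes. The paper cites this lemma from \cite{burning-1} without proving it, but its proof of Theorem~\ref{thm:retract} uses the same imagined-game scheme: a superset invariant on the burned sets, checked through Burner's selection, Staller's selection, and the spreading phase. Having Burner make an arbitrary legal move when his intended vertex is already burning, instead of passing and invoking the Continuation Principle, is fine, because the invariant $B_H\subseteq B_G$ is preserved by any enlargement of $B_G$.
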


\begin{proposition}[{\cite[Proposition 9]{burning-1}}]
    \label{prop:characterize-1-2}
    Let $G$ be a connected graph.
    \begin{enumerate}
        \item $\bg(G) = 1$ if and only if $G = K_1$;
        \item $\bg'(G) = 1$ if and only if $G = K_1$;
        \item $\bg(G) = 2$ if and only if $G \neq K_1$ and $\Delta(G) \geq |V(G)| - 2$; and
        \item $\bg'(G) = 2$ if and only if $G \neq K_1$ and $\delta(G) \geq |V(G)| - 2$ (if and only if $G$ is isomorphic to a complete graph on at least two vertices without a (possibly empty) matching).
    \end{enumerate}
\end{proposition}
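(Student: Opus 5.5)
The plan is to analyse directly what can have burned after rounds $1$ and $2$, since every item concerns only these first two rounds. The one structural fact I will use repeatedly is this: in round $1$ the spreading phase does nothing, because nothing is burning yet, so after round $1$ exactly one vertex $x$ (the first source) is burning. In round $2$ the spreading phase burns exactly $N[x]$, after which one more vertex is selected if any unburned vertex remains. Recall that the game ends in the first round in which all vertices are burning.

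For items 1 and 2, if $G=K_1$ then the unique vertex is selected in round $1$, whoever moves first, so the game lasts one round. Conversely, if $G$ has at least two vertices, only one vertex is burning at the end of round $1$, so at least two rounds are needed. This proves items 1 and 2, and also shows $\bg(G),\bg'(G)\ge 2$ whenever $G\ne K_1$.

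For item 3, let $G\neq K_1$ and suppose Burner selects $x$ in round $1$. At the end of round $2$ the burning set is $N[x]$ together with one further vertex, provided one remains. Hence everything is burning after round $2$ exactly when $|V(G)\setminus N[x]|\le 1$, that is, when $d(x)\ge |V(G)|-2$. Note that when exactly one vertex remains, Staller is forced to select it, and this is the point to state carefully. If $\Delta(G)\ge |V(G)|-2$, Burner chooses $x$ of maximum degree and the game ends in round $2$. If $\Delta(G)\le |V(G)|-3$, then at least two vertices are unburned after the spreading in round $2$ whatever $x$ Burner chose, and Staller's selection leaves at least one unburned. So $\bg(G)\ge 3$.

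Item 4 is the same argument with the roles swapped: Staller picks $x$ and Burner makes the round-$2$ selection. Burner finishes in round $2$ for every choice of $x$ if and only if every vertex satisfies $d(x)\ge |V(G)|-2$, i.e.\ $\delta(G)\ge |V(G)|-2$. Otherwise Staller starts at a vertex of degree at most $|V(G)|-3$, forcing a third round. The parenthetical reformulation follows because $\delta(G)\ge |V(G)|-2$ is equivalent to $\Delta(\overline{G})\le 1$. This means the edges of $\overline{G}$ form a (possibly empty) matching, so $G$ is $K_n$ with a matching removed, with $n\ge2$.

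I expect no real obstacle here. The only delicate point is the bookkeeping of the round convention: spreading precedes selection within a round, and a round may lack a selection phase. This is what makes ``at most one vertex outside $N[x]$'' the correct threshold rather than ``none''.
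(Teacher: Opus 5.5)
Your argument is correct and complete. After round $1$ only the first source $x$ is burning, and after round $2$ exactly $N[x]$ is burning plus one more selected vertex if any remains. Because a lone remaining vertex must be selected, the threshold is $|V(G)\setminus N[x]|\le 1$, i.e.\ $d(x)\ge |V(G)|-2$: Burner maximizes $d(x)$ in the Burner-start game, and Staller minimizes it in the Staller-start game.

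The paper does not reprove this proposition (it is quoted from \cite{burning-1}), so there is no in-paper proof to compare against.
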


Finally, we recall a useful general-purpose lemma from \cite{burning-1}.  We sometimes need to jump into the ``middle'' of an instance of the burning game, where some vertices are already burning.  Given a graph $G$ and $B \subseteq V(G)$, we let $G \vert B$ denote the graph $G$, with the vertices in $B$ already burning.  When we play the burning game on $G \vert B$, we say that we are playing the game \textit{relative to $B$}; the number of rounds needed to burn all of $G \vert B$ -- assuming that both players play optimally -- is denoted by $\bg(G \vert B)$ if Burner plays first and by $\bg'(G \vert B)$ if Staller plays first.

Intuitively, beginning the game with some vertices already burned cannot increase the length of the game; the \textit{Continuation Principle}, stated below, formalizes this intuition.

\begin{theorem}[Continuation Principle; {\cite[Theorem 4]{burning-1}}]
    \label{thm:continuation-principle}
    If $A \subseteq B \subseteq V(G)$, then $\bg(G|B) \leq \bg(G|A)$ and $\bg'(G|B) \leq \bg'(G|A)$.
\end{theorem}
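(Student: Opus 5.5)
The plan is a strategy-stealing (imagination) argument. Fix $A \subseteq B$; it suffices to treat $\bg$, since the proof for $\bg'$ is identical with Staller moving first. For the bound $\bg(G|B) \le \bg(G|A)$, we give Burner in $G|B$ a strategy that finishes within $\bg(G|A)$ rounds against any Staller play. Burner simultaneously runs an imagined game on $G|A$ and follows an optimal strategy there.

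The key invariant is that, after each round, the burned set $R_t$ of the real game contains the burned set $I_t$ of the imagined game. At time $0$ this holds because $A \subseteq B$. The spreading phase preserves it, since $N[R] \supseteq N[I]$ whenever $R \supseteq I$.

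The selection phase is handled as follows. When Staller picks an unburned vertex $v$ in the real game, Burner copies $v$ into the imagined game if $v$ is unburned there. Since $I \subseteq R$, a vertex unburned in the real game is unburned in the imagined game, so this move is always legal. When it is Burner's turn, he consults the optimal imagined strategy, which names some vertex $w$ unburned in $I$. If $w$ is unburned in $R$, he plays it in both games. Otherwise $w$ is already in $R$; he plays $w$ only in the imagined game and burns an arbitrary unburned vertex in the real game, if one exists. In every case the inclusion $I \subseteq R$ persists.

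One subtlety needs care. If the imagined game ends at round $t$, then $I_t = V(G)$ forces $R_t = V(G)$, so the real game ends by round $t$. If instead the real game ends first, there is nothing to prove. The remaining issue is that the imagined game must stay well defined whenever the real one continues, but this is automatic: the real game continuing means $R \ne V(G)$, hence $I \ne V(G)$. Also, Staller's real moves are arbitrary, so her imagined moves are arbitrary too, and Burner's optimal imagined strategy guarantees the imagined game ends within $\bg(G|A)$ rounds. Hence $\bg(G|B)\le \bg(G|A)$.

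The main obstacle is the case where the prescribed imagined move is already burned in reality and the extra real move is "wasted." This is resolved by noting that an extra burned vertex only enlarges $R$, so monotonicity of the invariant is all that is needed. The same reasoning shows the argument is insensitive to the ambiguity about whether a selection phase occurs in the final round.
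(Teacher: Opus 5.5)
Your proof is correct. It is the standard imagination-strategy argument: Burner simulates an optimal game on $G|A$ and maintains that the real burned set contains the imagined one. The paper does not reprove this statement (it cites \cite{burning-1}), but your argument is the same simulation technique the paper uses in the proof of Theorem~\ref{thm:retract}.

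Your handling of an already-burned prescribed move is also the right choice here: you burn an arbitrary real vertex and rely only on monotonicity of the invariant. By contrast, the ``pass'' device in the proof of Theorem~\ref{thm:retract} is justified by appealing to the Continuation Principle, so using it in this proof would be circular.
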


\section{Nordhaus–Gaddum type results}
\label{sec:nordhaus-gaddum}

In this section we prove upper and lower bounds for the sum and product of the game burning number of a graph and its complement. The analogous question for the chromatic number was studied in the 1950s by Nordhaus and Gaddum \cite{nordhaus1956NG} and has since been studied for many other graph parameters; see for example \cite{aouchiche2013NG}. This type of result was also studied for graph burning by Bonato et al.~\cite[Section 3]{bonato2016burn}.  In particular, they showed that for any $n$-vertex graph $G$ with $n \ge 2$, we have $4 \le b(G) + b(\overline{G}) \le n+2$ and $4 \le b(G)b(\overline{G}) \le 2n$; if additionally both $G$ and $\overline{G}$ are connected, then $b(G)b(\overline{G}) \le n+6$. 

The mentioned bounds established by Bonato et al.\ for $b(G) + b(\overline{G})$ in fact apply to $\bg(G) + \bg(\overline{G})$ as well, with essentially the same proof; see \cite[Theorem 18]{bonato2016burn}.

\begin{theorem}
\label{thm:sum_gen}
If $G$ is a graph of order $n$, $n\geq 2$, then $$4\le \bg(G) + \bg(\overline{G})\le n+2.$$    
\end{theorem}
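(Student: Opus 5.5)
Both inequalities will follow from simple structural observations, in the spirit of the proof of \cite[Theorem 18]{bonato2016burn}. Note that the game is played on arbitrary (possibly disconnected) graphs, and every round burns at least one new vertex: either a source is selected or the round is the last one. Hence $\bg(H) \le v(H)$ for every graph $H$.

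\textbf{Lower bound.} Since $n \ge 2$, neither $G$ nor $\overline{G}$ is $K_1$. On a graph with at least two vertices, the game cannot end in round $1$. In round $1$ there is no spreading, so exactly one vertex burns and at least one vertex remains unburned. Hence $\bg(G) \ge 2$ and $\bg(\overline{G}) \ge 2$, which gives the sum bound $4$. This matches Proposition~\ref{prop:characterize-1-2}, but I would argue it directly, since that proposition assumes connectedness.

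\textbf{Upper bound.} I will split on $\Delta(G)$.

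If $\Delta(G) \ge n-2$, then in round $1$ Burner plays a vertex $v$ of maximum degree in $G$. In round $2$ the fire spreads to $N_G(v)$, so at most one vertex remains unburned. If one does remain, it is selected as the source in round $2$. Thus $\bg(G) \le 2$. Combined with the trivial bound $\bg(\overline{G}) \le n$, this gives a sum of at most $n+2$.

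Symmetrically, if $\Delta(\overline{G}) \ge n-2$, then $\bg(\overline{G}) \le 2$ and we are done.

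\textbf{Remaining case.} Here $\Delta(G) \le n-3$ and $\Delta(\overline{G}) \le n-3$. I would apply Proposition~\ref{prop:Delta}; its proof is a direct strategy (Burner plays a max-degree vertex first) and does not seem to need connectedness. It gives $\bg(G) \le n - \Delta(G)$ and $\bg(\overline{G}) \le n - \Delta(\overline{G})$. For a vertex $v$ we have $d_{\overline{G}}(v) = n-1-d_G(v)$, so
\[
\Delta(\overline{G}) = n-1-\delta(G) \ge n-1-\Delta(G).
\]
Therefore
\[
\bg(G)+\bg(\overline{G}) \le 2n - \Delta(G) - \Delta(\overline{G}) \le 2n - (n-1) = n+1 \le n+2.
\]

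\textbf{Main obstacle.} The delicate point is the connectedness hypothesis in the cited propositions, since $G$ or $\overline{G}$ may be disconnected. If Proposition~\ref{prop:Delta} turns out to need connectedness, I would instead prove $\bg(H) \le n - \Delta(H)$ directly. Burner burns a max-degree vertex $v$ in round $1$, and round $2$'s spread burns all of $N[v]$. From then on, each round burns at least one new vertex, so at most $n-\Delta(H)-1$ further rounds are needed after round $1$. This yields $\bg(H) \le n-\Delta(H)$ whenever $\Delta(H)\le n-2$, which is all the argument requires.
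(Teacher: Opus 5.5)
Your proof is correct. The paper gives no proof of its own and only says that Bonato et al.'s argument for $b(G)+b(\overline{G})$ carries over. Your argument is that adaptation: $\bg\ge 2$ on any graph with at least two vertices gives the lower bound. For the upper bound, $\Delta\ge n-2$ forces $\bg\le 2$ against the trivial $\bg\le n$, and otherwise Proposition~\ref{prop:Delta} together with $\Delta(G)+\Delta(\overline{G})\ge n-1$ gives a sum of at most $n+1$.

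Your worry about connectedness is unnecessary, since Proposition~\ref{prop:Delta} is stated for arbitrary graphs. Your direct proof of $\bg(H)\le n-\Delta(H)$ for $\Delta(H)\le n-2$ is also correct.
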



Our bounds on $\bg(G)\bg(\overline{G})$ are also identical to the bounds on $b(G)b(\overline{G})$ from \cite{bonato2016burn}; however, establishing them takes a bit more work.  We begin with a helpful lemma.

\begin{lemma}
    \label{lem:disconnected-3}
    If $G$ is a disconnected graph of order $n$ and all connected components of $G$ have order at least 3, then $\bg(G) \leq \frac{n+1}{2}$ and $\bg'(G) \leq \frac{n+1}{2}$. 
\end{lemma}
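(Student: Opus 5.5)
The plan is to give \fastplayer an explicit strategy and count burned vertices round by round. The target is an invariant: after each round $t \ge 1$ of the game (in either version), either $G$ is fully burned or at least $2t-1$ vertices are burning, with a gain of one extra vertex somewhere to fix parity. From this, $T \le \frac{n+1}{2}$ should follow for the round $T$ in which the game ends. The structural fact is that each component $C$ is connected and has $|C| \ge 3$. So once $C$ contains a burning vertex and is not yet fully burned, the spreading phase of every subsequent round burns at least one new vertex of $C$. Moreover, if $C$ is first lit in round $s$, then in round $s+1$ it is still incomplete before spreading, so round $s+1$ has a positive spreading gain.

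\fastplayer's strategy is as follows. While some component contains no burning vertex, he selects a vertex in such an \emph{unlit} component. Otherwise he selects an unburned vertex with no burning neighbour, if one exists, and any unburned vertex if not. We call round $t \ge 2$ a \emph{deficient} round if its spreading phase burns nothing. In a deficient round, every lit component is already complete, so some unlit component must remain (else the game is over). By the observation above, no new component was lit in round $t-1$. Since unlit components only disappear, \fastplayer would have lit one had he moved in round $t-1$. Hence round $t-1$ was a \slowplayer move inside an already lit component, and round $t$ is a \fastplayer move that lights a fresh component. It follows that deficient rounds are never consecutive. Also, at least one vertex is burned in every non-final round by selection.

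The amortization I would try charges each deficient round $t$ to round $t-1$. In round $t-1$, \slowplayer selected a vertex in a lit component that the spread of round $t$ would otherwise have reached. I would also exploit that $G$ is disconnected with all components of order at least $3$. Whenever two lit components are simultaneously incomplete, the spreading gain is at least $2$. A component lit by a single source needs at least two spreading rounds unless it is a star, and even then it contributes at least $2$ via spreading in its first spreading round. Lighting each of the (at least two) components therefore contributes a surplus that pays for the deficient rounds.

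The main obstacle I expect is this off-by-one bookkeeping. A naive invariant of ``$\ge 2t-1$ burned after round $t$'' only yields $T \le \frac{n+2}{2}$. To reach $\frac{n+1}{2}$, I need one extra vertex: either after round $2$, from the second component or a spread of size $2$, or in the final round, where the last spreading phase burns all remaining vertices, typically at least $2$. I would first try to prove the strengthened invariant ``at least $2t$ vertices are burning after every non-final round $t \ge 2$'' by a case analysis on rounds $1$ and $2$ (who moves, whether the second source lies in the same component). The Continuation Principle (Theorem~\ref{thm:continuation-principle}) lets me restart the count after those rounds. The \slowplayer-start case is handled identically, with \fastplayer's first move in round $2$ lighting a second component.
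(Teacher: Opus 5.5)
\textbf{There is a genuine gap, in two places.} Your analysis of deficient rounds is correct: they occur only on Burner turns, right after a Staller move inside a lit component, and never consecutively. The problems are the rest of the plan.

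\emph{First, the strategy is underspecified and fails as stated.} You let Burner light an unlit component at an arbitrary vertex. Then the claimed surplus from lighting need not exist, because Staller can finish a lit component herself. Take $G=P_3\cup P_3$ with components $a_1a_2a_3$ and $b_1b_2b_3$. Burner lights $a_1$, the spread burns $a_2$, and Staller burns $a_3$. Round $3$ is then deficient, Burner lights the second component, and the game lasts until round $4>\frac{7}{2}$. Component $A$ contributed only one spread vertex. So neither ``a single-source component needs two spreading rounds unless it is a star'' nor ``two simultaneously incomplete components give gain $2$'' pays for the deficient round. What is missing is that Burner must light at a vertex of degree at least $2$, which exists since each component is connected of order at least $3$. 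Then the spreading phase immediately after every lighting move, in particular after every deficient round, burns at least two vertices.

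\emph{Second, the strengthened invariant you plan to prove is false for every Burner strategy.} In $P_4\cup P_4$, Burner's first move is, by symmetry, $a_1$ or $a_2$ in $a_1a_2a_3a_4$; Staller answers $a_4$. Then $A$ is fully burned by the spreading of round $3$ and nothing else is lit. After Burner's selection in round $3$, only $5<6$ vertices burn, yet the game is not over. So no case analysis on rounds $1$ and $2$ can establish ``at least $2t$ after every non-final round $t\ge 2$''.

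\textbf{How to repair it.} With the degree-$2$ lighting rule, your amortization goes through with a weaker invariant. After every non-final round $t\ge 3$, at least $2t$ vertices burn, unless round $t$ is deficient. In that case only $2t-1$ are guaranteed, but the two unburned neighbours of Burner's new source are still unburned. Either way, if the game ends in round $T\ge 4$, then $n\ge 2T-1$; the case $T\le 3$ is trivial since $n\ge 6$.

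\textbf{How the paper does it.} The paper avoids component bookkeeping entirely. Burner greedily burns any vertex with at least two unburned neighbours. Each such move, the following spread and Staller's reply add at least four new vertices over two rounds. Once no such vertex exists, every component is lit and every unburned vertex is within distance $2$ of the fire, so the game ends within two rounds. A short case check on those final rounds gives $\frac{n+1}{2}$ in both versions of the game.
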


\begin{proof}
We give a strategy for Burner to ensure that, for most of the game, at least four vertices are burned within each pair of consecutive rounds. On each of Burner's turns, if any unburned vertex $v$ has two or more unburned neighbors, then Burner burns $v$. This ensures that at least two more vertices will be burned in the spreading phase of the subsequent round. If the game does not end during this spreading phase, then one more vertex is chosen by Staller, for a total of at least four vertices burned within two rounds.

Suppose instead that, on Burner's turn, every unburned vertex has at most one unburned neighbor. Since every component of $G$ has order at least $3$, it follows that every component of $G$ has at least one burning vertex. Indeed, otherwise a completely unburned component, being connected and of order at least $3$, would contain a vertex with at least two unburned neighbors. Moreover, each unburned vertex either has a burning neighbor, or has a unique unburned neighbor which itself has a burning neighbor. Consequently, the game will end within the next two rounds, regardless of how the players play.

We first consider the Burner-start game. Let $k$ be the number of Burner's first turns on which he can burn a vertex with at least two unburned neighbors, before either the game ends or no such vertex is available on his next turn. If the game ends by the end of round $2k$, then at least $4k-1$ vertices have burned: the first $k-1$ pairs of rounds account for at least $4(k-1)$ vertices, while Burner's $k$th move and the subsequent spreading phase account for at least three more. Hence $n \geq 4k-1$, and therefore $\bg(G) \leq 2k = \frac{4k-1}{2}+\frac{1}{2} \leq \frac{n+1}{2}$.

Suppose now that the game does not end by the end of round $2k$. It follows that at least $4k$ vertices have burned during the first $2k$ rounds. In addition, at least one vertex must burn during the spreading phase of round $2k+1$. Indeed, if no vertex were burned during this spreading phase, then every component containing an unburned vertex would be completely unburned, and hence would contain a vertex with at least two unburned neighbors, contrary to the choice of $k$.

If the game ends in round $2k+1$, then at least $4k+1$ vertices have burned, hence $\bg(G) \leq 2k+1 = \frac{4k+1}{2}+\frac{1}{2} \leq \frac{n+1}{2}$. If instead the game ends in round $2k+2$, then Burner must have burned another vertex in round $2k+1$, and at least one more burns during the spreading phase of round $2k+2$; hence $\bg(G) \leq 2k+2 = \frac{4k+3}{2}+\frac{1}{2} \leq \frac{n+1}{2}$. Finally, if the game ends in round $2k+3$, then Staller must have burned another vertex in round $2k+2$, and at least one more must have burned in the spreading phase of round $2k+3$, so $\bg(G) \leq 2k+3 = \frac{4k+5}{2}+\frac{1}{2} \leq \frac{n+1}{2}$.

In the Staller-start game, after the first move of Staller, Burner uses the same strategy as above. Since $G$ is disconnected, there is a component which is still completely unburned after Staller's first move. As this component has order at least $3$, it contains a vertex with at least two unburned neighbors, so Burner can use the prescribed strategy on his first turn. Moreover, at least one vertex is burned in the spreading phase of round $2$. Thus, if Burner can burn a vertex with at least two unburned neighbors for his first $k$ turns and the game does not end during the spreading phase following his $k$th move, then after the first $2k+1$ rounds at least $4k+2$ vertices are burned. If the game ends during that spreading phase, then at least $4k+1$ vertices are burned in $2k+1$ rounds, which also gives $\bg'(G)\leq\frac{n+1}{2}$. With similar arguments as above we obtain $\bg'(G)\leq\frac{n+1}{2}$.
\end{proof}

\begin{theorem}
    \label{thm:product_gen}
    If $G$ is a graph of order $n$ with $n \geq 2$, then $$4 \leq \bg(G) \bg(\overline{G}) \leq 2n.$$ 
\end{theorem}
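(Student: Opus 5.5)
The plan is to prove the lower bound directly from the rules of the game, and to prove the upper bound by a case analysis in which one of $\bg(G)$, $\bg(\overline{G})$ is always at most $3$ (usually $2$) and the other is controlled by a counting argument.

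\emph{Lower bound.} Since $n \ge 2$, neither $G$ nor $\overline{G}$ can be burned in one round. In round $1$ nothing spreads and only one vertex is selected. Hence $\bg(G) \ge 2$ and $\bg(\overline{G}) \ge 2$, so the product is at least $4$.

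\emph{Upper bound.} Throughout I use the trivial bound $\bg(H) \le n$ for any $n$-vertex graph $H$, because every round before the end burns at least one new vertex. By symmetry between $G$ and $\overline{G}$, I split into cases.

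\begin{itemize}
\item \textbf{Case 1: $\Delta(G) \ge n-2$ or $\Delta(\overline{G}) \ge n-2$.} Say $\Delta(G) \ge n-2$. A vertex of degree at least $n-2$ is burned first, all its neighbours burn in the next spreading phase, and Burner's or Staller's second selection covers the (at most one) remaining vertex. This is the argument behind Proposition~\ref{prop:characterize-1-2}, and it does not need connectivity, so $\bg(G) = 2$. Then $\bg(G)\bg(\overline{G}) \le 2n$. This case covers every disconnected $G$ with an isolated vertex or a $K_2$-component, since then $\overline{G}$ has a vertex of degree at least $n-2$.

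\item \textbf{Case 2: $G$ disconnected with all components of order at least $3$.} Lemma~\ref{lem:disconnected-3} gives $\bg(G) \le \frac{n+1}{2}$. The graph $\overline{G}$ contains a spanning complete multipartite graph, so it is connected with $\rad(\overline{G}) \le 2$. Proposition~\ref{prop:radius} then gives $\bg(\overline{G}) \le 3$. Hence the product is at most $\frac{3(n+1)}{2} \le 2n$, since $n \ge 6$ here.

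\item \textbf{Case 3: both $G$ and $\overline{G}$ connected, and $\operatorname{diam}(G) = \operatorname{diam}(\overline{G}) = 2$.} Both radii are at most $2$, so Proposition~\ref{prop:radius} gives both game burning numbers at most $3$. The product is at most $9 \le 2n$ once $n \ge 5$; smaller $n$ are handled directly.

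\item \textbf{Case 4: both connected and, say, $\operatorname{diam}(G) \ge 3$.} Pick $u,v$ with $d_G(u,v) \ge 3$. No vertex is adjacent in $G$ to both $u$ and $v$, so in $\overline{G}$ every vertex is adjacent to $u$ or to $v$, and $uv \in E(\overline{G})$. Therefore $\operatorname{ecc}_{\overline{G}}(u) \le 2$, and $\bg(\overline{G}) \le 3$ by Proposition~\ref{prop:radius}. It remains to show $\bg(G) \le \frac{2n}{3}$.
\end{itemize}

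\emph{Main obstacle.} Case 4 needs a connected analogue of Lemma~\ref{lem:disconnected-3}. I would rerun its greedy strategy: Burner always burns an unburned vertex with at least two unburned neighbours while one exists. Once none exists, connectivity forces every unburned vertex to be within distance $2$ of the fire, so the game ends within two rounds.

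The counting, which is the same four-vertices-per-two-rounds argument as in the lemma, should give $\bg(G) \le \frac{n+2}{2}$ or so for connected $G$ of order at least $3$. This suffices: $\frac{3(n+2)}{2} \le 2n$ for $n \ge 6$.

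In Case 4 the assumptions force $n \ge 4$ (for example $P_4$). The small orders $n \le 5$ I would verify by hand, using Case 1 wherever possible. If the greedy count turns out to be too weak, my fallback is Proposition~\ref{prop:Delta}. Since Case 1 failed, $\Delta(G) \le n-3$, which gives $\bg(G) \le n - \Delta(G)$, and I would combine this with $\Delta(G) \ge 2$.
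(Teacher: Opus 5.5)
Your proof is correct. It differs from the paper's in the connected case, and the difference is an improvement.

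\textbf{How the routes differ.} After disposing of $\Delta(G)\ge n-2$, the paper assumes without loss of generality that $\bg(G)\ge\bg(\overline{G})$. When $\diam(\overline{G})\ge 3$ it only uses $\diam(G)\le 3$, which gives $\bg(G),\bg(\overline{G})\le 4$. The resulting bound $16\le 2n$ needs $n\ge 8$, and the subcase $\diam(\overline{G})\le 2$ needs $n\ge 6$. So the paper resorts to a computer check for $4\le n\le 7$.

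Your observation that $\diam(G)\ge 3$ forces $\operatorname{ecc}_{\overline{G}}(u)\le 2$ is sharper: one of the two numbers is always at most $3$. Your Case 1 is also symmetric in $G$ and $\overline{G}$, so afterwards $\delta(G),\delta(\overline{G})\ge 2$. This forces $n\ge 5$. For $n=5$ it forces $G\cong C_5$, which lies in your Case 3 with product $9\le 10$. Hence Case 4 only arises for $n\ge 6$, where $3\cdot\frac{n+2}{2}\le 2n$ holds, and no hand or computer verification is needed at all. Your remark that $P_4$ is an instance of Case 4 is off: $\Delta(P_4)=2=n-2$, so $P_4$ falls under Case 1.

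\textbf{The ``main obstacle'' is not one.} The paper simply combines $\rad(G)\le\floor{n/2}$ for connected $G$ with Proposition~\ref{prop:radius}, which gives $\bg(G)\le\frac{n}{2}+1$ immediately.

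Your greedy argument also works, and gives the slightly better $\bg(G)\le\frac{n+1}{2}$. The Burner-start part of the proof of Lemma~\ref{lem:disconnected-3} uses the component-order hypothesis only for two things:
\begin{itemize}
\item once no vertex with two unburned neighbours remains, every unburned vertex is within distance $2$ of the fire;
\item the following spreading phase burns at least one vertex.
\end{itemize}
Connectivity, together with at least one burned vertex, gives both for free.

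Your fallback via Proposition~\ref{prop:Delta} would not have sufficed on its own, since $3(n-2)\le 2n$ fails for $n>6$.
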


\begin{proof}
    As $n \geq 2$, $\bg(G) \bg(\overline{G}) \geq 4$ and this sets the lower bound. 
    
    For the upper bound, if $\Delta(G) \geq n-2$ then by Proposition~\ref{prop:characterize-1-2} we have $\bg(G) = 2$, and since clearly $\bg(\overline{G}) \leq n$ we get $\bg(G) \bg(\overline{G}) \leq 2n$. 
    
    If $\Delta(G) \leq n-3$, then $\delta(\overline{G})\geq 2$ implies that each component of $\overline{G}$ is of order at least $3$ and that $n\geq 4$.
    Suppose first that $\overline{G}$ is not connected and let $u, v\in V(G)$. If $u$ and $v$ are in different components of $\overline{G}$, then $uv \in E(G)$ and $d_G(u, v) = 1$. On the other hand, if $u, v$ are in the same component of $\overline{G}$, then in $G$ they are both adjacent to some vertex in a different component of $\overline{G}$. Thus $d_G(u, v) \leq 2$ and we have that $\diam(G) \leq 2$. Proposition \ref{prop:radius} implies that $\bg(G) \leq 3$ and Lemma \ref{lem:disconnected-3} gives $\bg(\overline{G}) \leq \frac{n+1}{2}$. Thus $\bg(G) \bg(\overline{G}) \leq 3 \frac{n+1}{2} \leq 2n$ for all $n \geq 3$.
   
   Suppose now that $G$ and $\overline{G}$ are both connected.
   Without loss of generality we may assume that $\bg(G) \geq \bg(\overline{G})$. If $\diam(\overline{G}) \geq 3$, then $\diam(G) \leq 3$, thus $\bg(G) \leq 4$ by Proposition \ref{prop:radius}. By the assumption this means that $\bg(\overline{G}) \leq 4$ as well, so $\bg(G) \bg(\overline{G}) \leq 16 \leq 2n$ for every $n \geq 8$. 
    Otherwise, $\diam(\overline{G}) \leq 2$, thus by Proposition \ref{prop:radius} we have $\bg(\overline{G}) \leq 3$. Since $G$ is connected and $\rad(G) \leq \floor{\frac{n}{2}}$, Proposition \ref{prop:radius} gives $\bg(G) \leq \frac{n}{2} + 1$. Thus $\bg(G) \bg(\overline{G}) \leq 3 (\frac{n}{2} + 1) = \frac{3n+6}{2} \leq 2n$ for every $n \geq 6$.
    For $4\leq n\leq 7$ a computer check yields $\bg(G)\bg(\overline{G})\leq 2n$. 
\end{proof}

The lower bound in Theorem \ref{thm:product_gen} is tight e.g. when $G = K_{1,n-1}$ and the upper bound is tight e.g. when $G = K_{n}$ (both for $n \geq 2$). When both $G$ and $\overline{G}$ are connected, the upper bound can in fact be improved, as we next show. But first, we recall some definitions and prove some auxiliary results.

Let $k \geq 1$. A set $S \subseteq V(G)$ is a \emph{distance $k$-dominating set} of $G$ if every vertex of $G$ is within distance $k$ from some vertex in $S$. More formally, for every $v \in V(G)$ it holds that $d(v, S) \leq k$. The minimum cardinality of a distance $k$-dominating set of $G$ is the \emph{distance $k$-domination number} $\gamma_k(G)$ of $G$. 

\begin{proposition}
    \label{lem:k-dom}
    If $G$ is a graph, then $\bg(G) \leq \min_{k \geq 1} \{ 2 \gamma_k(G) + k - 1\}$ and $\bg'(G) \leq \min_{k \geq 1} \{ 2 \gamma_k(G) + k\}$.
\end{proposition}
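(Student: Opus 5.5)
Fix $k \geq 1$ and a minimum distance $k$-dominating set $S = \{s_1, \dots, s_m\}$ with $m = \gamma_k(G)$. The plan mirrors the strategy-stealing argument in the proof of Proposition~\ref{prop:trivial-bounds}. Burner tries to make every vertex of $S$ burn as early as possible. After that, the fire spreads at most $k$ more rounds until it covers everything.

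The strategy is as follows. On each of his turns, Burner selects an unburned vertex of $S$ if one exists. If every vertex of $S$ is already burning, he selects an arbitrary unburned vertex. If no unburned vertex exists at all, the game is over.

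We first show that all of $S$ burns early enough. In the Burner-start game Burner moves in rounds $1, 3, 5, \dots$. On each of his moves he either burns a new vertex of $S$ or finds $S$ already fully burning. Vertices of $S$ may also catch fire through spreading or through Staller's moves, which only helps. Hence after round $2m-1$ every vertex of $S$ is burning, unless the game has already ended. In the Staller-start game Burner moves in rounds $2, 4, \dots$, so $S$ is fully burning by the end of round $2m$.

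Next we count the spreading rounds. Take any $v \in V(G)$ and some $s \in S$ with $d(v,s) \le k$. Once $s$ is burning at the end of round $t$, the spreading phases of rounds $t+1, \dots, t+k$ ignite every vertex within distance $k$ of $s$. So $v$ burns by round $2m - 1 + k$ in the Burner-start game and by round $2m + k$ in the Staller-start game. This gives $\bg(G) \le 2\gamma_k(G) + k - 1$ and $\bg'(G) \le 2\gamma_k(G) + k$, and minimizing over $k$ yields the statement.

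The only delicate point is the first step. If a vertex of $S$ is already burning when Burner wants to play it, he must not waste the opportunity. Since he picks any still-unburned vertex of $S$, the number of burning vertices of $S$ after his $i$th move is at least $\min\{i, m\}$. One should also note that the argument does not need $G$ to be connected. A distance $k$-dominating set meets every component, so nothing changes for disconnected graphs.
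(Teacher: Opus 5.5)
Your proof is correct and is the paper's argument: Burner plays the vertices of a minimum distance $k$-dominating set on his first $\gamma_k(G)$ turns, so all of them are burning by round $2\gamma_k(G)-1$ (resp.\ $2\gamma_k(G)$ when Staller starts), and $k$ more spreading phases then burn everything. Your explicit treatment of dominating vertices that are already burning when Burner reaches them, and your remark on disconnected $G$, make rigorous details the paper leaves implicit.
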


\begin{proof}
    Let $D_k = \{x_1, \ldots, x_\ell\}$ be a distance $k$-dominating set of $G$ where $\ell = \gamma_k(G)$. Burner's strategy is to play vertices $x_1, \ldots, x_\ell$ in his turns (in rounds $1$, \ldots, $2 \ell - 1$). Because every vertex of $G$ is within distance $k$ of some $x_i$, after $k$ additional rounds all vertices of $G$ are burned. Thus $\bg(G) \leq 2 \ell - 1 + k = 2 \gamma_k(G) + k - 1$ and $\bg'(G) \leq 2 \ell + k = 2 \gamma_k(G) + k$.
\end{proof}

\begin{proposition}
    \label{prop:k-dom-connected}
    If $G$ is a connected graph, then $\bg(G) \leq \min_{k \geq 1} \{\gamma_k(G) + 3k\}$ and $\bg'(G) \leq \min_{k \geq 1} \{\gamma_k(G) + 3k + 1\}$.
\end{proposition}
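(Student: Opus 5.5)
The plan is to extend the argument of Proposition~\ref{lem:k-dom}. There, Staller's moves were simply wasted, which cost a factor of $2$ on $\gamma_k(G)$. Here I want every round, Burner's or Staller's, to make progress toward ``covering'' the centres of a fixed dominating set. The connectivity of $G$ and the spreading of the fire should pay for the extra slack of $2k$ rounds, compared with the $k$ rounds used in Proposition~\ref{lem:k-dom}. I have not managed to make the charging of Staller's moves fully rigorous; that is the main gap, discussed in the last paragraph.

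Fix $k\ge 1$ and a minimum distance $k$-dominating set $D=\{x_1,\dots,x_\ell\}$, so $\ell=\gamma_k(G)$. At any moment call a centre $x_i$ \emph{covered} if some burning vertex lies within distance $2k$ of $x_i$. Every vertex of $G$ is within distance $k$ of some centre. Hence, once all centres are covered at the end of round $t$, every vertex is within distance $3k$ of a burning vertex, and the game ends by round $t+3k$. It therefore suffices to show that Burner can make all centres covered within the first $\ell$ rounds. Two observations make this plausible. First, the burning set only grows, so a covered centre stays covered. Second, Burner's strategy is: on his turn, burn an uncovered centre if one exists, and otherwise play arbitrarily. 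Each of his moves then covers at least one new centre.

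The step I expect to be the main obstacle is Staller's moves. A vertex $v$ chosen by Staller lies within distance $k$ of some centre $x_j$, but $x_j$ may already be covered, so her move need not increase the number of covered centres directly. My first attempt is to use connectivity and the spreading phase to charge each Staller turn. The fire grows by one layer per round along paths, and $G$ is connected. So while some centre is uncovered, the burning set has a neighbour that is not yet burning. I will try to show that in each round at least one centre becomes newly covered, either by the selected source or by spreading. Consider the centres ordered by their distance from the burning set. The gap between the coverage radius $2k$ and the domination radius $k$ should let one round of spreading reach within $2k$ of a fresh centre. The idea is that moving one step along a shortest path from the burned set toward an uncovered centre passes, every $2k+1$ steps, into the $k$-ball of a new centre.

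If this per-round accounting fails as stated, I will fall back on an amortised version. Burner's moves account for roughly $\ell/2$ centres. The spreading along a spanning tree over the remaining rounds accounts for the others, with the $3k$ slack absorbing the deficit. For the Staller-start game, the same strategy shifted by one round, with Staller's first move ignored, gives the bound $\gamma_k(G)+3k+1$ for $\bg'(G)$.
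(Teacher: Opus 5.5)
Your reduction is sound: if every centre is within distance $2k$ of a burning vertex at the end of round $t$, the game is over by round $t+3k$, so it suffices to cover all centres by the end of round $\ell$. The gap is where you located it, but it is not just a matter of rigour. The per-round charging is false, and the greedy rule ``burn any uncovered centre'' does not even reach your intermediate goal.

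Here is an example. Take $k=2$ and a hub $c_0$ joined to $c_1,\dots,c_m$ by internally disjoint paths of length $5=2k+1$. Add a pendant path $c_j w_j' w_j$ at every $c_j$ ($0\le j\le m$). The balls $N_2[w_j]=\{w_j,w_j',c_j\}$ are pairwise disjoint, so $D=\{c_0,\dots,c_m\}$ is a minimum distance-$2$ dominating set and $\ell=m+1$.
\begin{itemize}
\item Burner burns $c_1,c_2,c_3$ in rounds $1,3,5$; each is uncovered when he burns it.
\item Staller burns $w_1$ in round $2$, $w_2$ in round $4$ and $w_3$ in round $6$; each just extends a pendant path that is already on fire.
\item Round $2$ covers $c_0$, but rounds $4$ and $6$ cover no new centre. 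The fire from $c_1$ comes within distance $4$ of the other leaf centres only in round $7$.
\item For $m=4$, the centre $c_4$ is still uncovered after round $5=\ell$.
\end{itemize}
Your amortised fallback fails for the same reason. A fire front advances one step per round, while consecutive centres may be $2k+1$ apart. So spreading covers a new centre only about once every $2k+1$ rounds per front, and it cannot pay for $\ell/2$ centres within $\ell$ rounds.

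The missing idea is to fix Burner's sources in advance and not charge Staller at all.
\begin{itemize}
\item Let $H$ be the graph on $D$ with $x_ix_j\in E(H)$ whenever $d_G(x_i,x_j)\le 2k+1$.
\item $H$ is connected. Every vertex is within distance $k$ of a centre, so adjacent vertices of $G$ have centres at distance at most $2k+1$, and any path in $G$ between two centres yields a walk in $H$.
\item For $\ell\ge 2$, $H$ has no isolated vertices, so Ore's bound gives $\gamma(H)\le \ell/2$.
\item Burner plays a minimum dominating set of $H$ on his first $\gamma(H)$ turns, finishing by round $2\gamma(H)-1\le \ell-1$. If one of these vertices is already burning, even better.
\item Staller's moves only add burning vertices, so they never need to be accounted for.
\end{itemize}
Every centre is then within distance $2k+1$ of a vertex burning by round $\ell-1$. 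Hence it is covered in your sense by the end of round $\ell$, and your reduction gives $\bg(G)\le \ell+3k$. The case $\ell=1$ is immediate, since $\bg(G)\le k+1$. In the example above, this strategy simply has Burner play the hub $c_0$ first, which covers every centre in round $2$. For $\bg'(G)$, the same strategy shifted by one round gives $\ell+3k+1$, as you proposed.
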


\begin{proof}
    Let $D_k = \{x_1, \ldots, x_\ell\}$ be a distance $k$-dominating set of $G$ where $\ell = \gamma_k(G)$. If $\ell=1$, then Burner plays $x_1$ on his first turn. Since every vertex of $G$ is within distance $k$ of $x_1$, we have $\bg(G)\leq k+1\leq 1+3k$ and $\bg'(G)\leq k+2\leq 1+3k+1$, as desired. We may therefore assume that $\ell\geq 2$. Let $H$ be a graph with vertex set $D_k$ and $x_i x_j \in E(H)$ if and only if $d_G(x_i, x_j) \leq 2k+1$. Since $G$ is connected and all vertices of $G$ are within distance $k$ of some $x_i$, $H$ must also be connected, thus $\gamma(H) \leq \frac{|V(H)|}{2} = \frac{\ell}{2}$. Let $D = \{x_{i_1}, \ldots, x_{i_r}\}$ be a minimum dominating set of $H$, where $r = \gamma(H)$. 

    Burner's strategy is to play vertices from $D$ (in any order). We claim that once Burner has done this, after $3k+1$ additional rounds, all vertices of $G$ are burned.

    Let $v \in V(G)$. Then there exists $x_i \in D_k$ such that $d_G(v, x_i) \leq k$. If $x_i \in D$, then $v$ is burned within the next $k$ rounds after $x_i$ has been played. If $x_i \notin D$, then there exists $x_j \in D$ such that $d_H(x_i, x_j) \le 1$, hence $d_G(x_i, x_j) \le 2k+1$.  Within $2k+1$ rounds after $x_j$ is played, $x_i$ is burned; within another $k$ rounds, $v$ is burned as well.  It follows that $\bg(G) \leq 2 \frac{\ell}{2} - 1 + 3 k + 1 = \ell + 3k = \gamma_k(G) + 3k$. Similarly, $\bg'(G) \leq 2 \frac{\ell}{2} + 3k + 1 = \gamma_k(G) + 3k + 1$.
\end{proof}

By modifying the proof of Theorem \ref{thm:product_gen}, we can obtain a better upper bound if both $G$ and $\overline{G}$ are connected, similarly as was done in~\cite{bonato2016burn}. Since $G$ is connected, Proposition \ref{prop:k-dom-connected} together with the bound $\gamma_k(G) \leq \frac{n}{k+1}$ for all connected graphs on $n \geq k+1$ vertices from \cite{meir1975upper} gives $\bg(G) \leq \frac{n}{k+1} + 3k$ if $n \geq k+1$. Applying this bound for $k=2$ and $n \geq 3$ to the case where $\diam(\overline{G}) \le 2$, we obtain $\bg(G) \bg(\overline{G}) \leq 3 \left( \frac{n}{3} + 6 \right) = n+18$. This gives the following result.

\begin{corollary}
    \label{cor:product-connected-burner}
    If $G$ and $\overline{G}$ are both connected on $n \geq 3$ vertices, then $\bg(G) \bg(\overline{G}) \leq n+18$.
\end{corollary}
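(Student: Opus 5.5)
The plan is to rerun the case analysis from the proof of Theorem~\ref{thm:product_gen}, now assuming that both $G$ and $\overline{G}$ are connected. By symmetry we may assume $\bg(G) \ge \bg(\overline{G})$.

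\emph{Case $\diam(\overline{G}) \ge 3$.} Then $\diam(G) \le 3$, so $\rad(G) \le 3$. Proposition~\ref{prop:radius} gives $\bg(G) \le 4$, and hence also $\bg(\overline{G}) \le 4$. The product is therefore at most $16 \le n+18$, which holds for every $n$.

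\emph{Case $\diam(\overline{G}) \le 2$.} This includes $\diam(\overline{G}) = 1$, which does not occur here: it would make $\overline{G}$ complete and $G$ edgeless, hence disconnected for $n \ge 2$. Proposition~\ref{prop:radius} gives $\bg(\overline{G}) \le \rad(\overline{G}) + 1 \le 3$. For $G$ we do not use the radius bound, which is too weak. Instead we apply Proposition~\ref{prop:k-dom-connected} with $k=2$, which is allowed because $G$ is connected. This gives $\bg(G) \le \gamma_2(G) + 6$. We then use the bound $\gamma_2(G) \le \frac{n}{3}$ of Meir and Moon~\cite{meir1975upper}, valid for connected graphs on $n \ge 3$ vertices; this is where the hypothesis $n \ge 3$ is used. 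Combining,
\[
\bg(G)\,\bg(\overline{G}) \le 3\left(\frac{n}{3} + 6\right) = n + 18 .
\]

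I do not expect a real obstacle. The two points to check are that $n \ge 3$ is exactly what the Meir--Moon bound requires, and that the symmetry assumption is never used to transfer the bound $\bg(\overline{G}) \le 3$ onto $G$.

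Note that the second case actually bounds $\bg(G)\bg(\overline{G})$ directly, with $G$ and $\overline{G}$ in the roles they have there, so the ordering assumption only matters in the first case. If we wanted to avoid it entirely, the second case applies to whichever of $G, \overline{G}$ has diameter at most $2$. Otherwise both have diameter at least $3$, which cannot happen: $\diam(\overline{G}) \ge 3$ forces $\diam(G) \le 3$ but more precisely $\le 2$. That stronger fact is not needed for the argument, since the first case already gives $16$.
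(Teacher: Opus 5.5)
Your proof is correct and is essentially the paper's argument: you rerun the connected case of Theorem~\ref{thm:product_gen}, keep the $\diam(\overline{G})\ge 3$ branch (product at most $16$), and in the $\diam(\overline{G})\le 2$ branch you replace the radius bound for $G$ by Proposition~\ref{prop:k-dom-connected} with $k=2$ together with the Meir--Moon bound $\gamma_2(G)\le n/3$. One correction to your closing aside: $G$ and $\overline{G}$ can both have diameter $3$ (e.g.\ the self-complementary $P_4$), so ``more precisely $\le 2$'' is false, since only $\diam(\overline{G})\ge 4$ forces $\diam(G)\le 2$; as you say this is not used, because in that situation both diameters equal $3$, both game burning numbers are at most $4$, and the product is still at most $16$.
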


We next turn our attention to the Staller-start burning game.

\begin{theorem}
    \label{thm:sum-S-game}
    If $G$ is a graph of order $n \geq 2$, then $4 \leq \bg'(G) + \bg'(\overline{G}) \leq n+2$.
\end{theorem}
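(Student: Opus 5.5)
Both bounds should follow from elementary facts about $\bg'$ together with a case split on the degrees in $G$.

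\emph{Lower bound.} We need $\bg'(H)\ge 2$ for every graph $H$ on $n\ge 2$ vertices. In round 1 there is no spreading (nothing is burning yet), and exactly one vertex is selected. Hence the game cannot end before round 2, and $\bg'(G)+\bg'(\overline{G})\ge 4$. This argument does not use connectivity, so it also covers disconnected graphs.

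\emph{Upper bound.} First I record the trivial bound $\bg'(H)\le v(H)$. In every round before the game ends, a vertex is selected (or at least one vertex burns), so after $t$ rounds at least $t$ vertices are burning. By symmetry between $G$ and $\overline{G}$, I split into cases on the degrees.

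\emph{Case 1: $\delta(G)\ge n-2$.} If $G$ is connected and $n\ge2$, Proposition~\ref{prop:characterize-1-2}(4) gives $\bg'(G)=2$, and then $\bg'(G)+\bg'(\overline{G})\le 2+n$.

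For disconnected graphs, Proposition~\ref{prop:characterize-1-2} is stated only for connected $G$, so I argue directly. If $\delta(G)\ge n-2$ and $n\ge 3$, then $G$ is connected, since every vertex is adjacent to all but at most one other vertex. The only remaining disconnected case is $n=2$, $G=\overline{K_2}$. There both players play one vertex each, so $\bg'(G)=2$ and $\bg'(\overline{G})=\bg'(K_2)=2$, giving sum $4\le n+2$.

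By symmetry, the same bound holds whenever $\delta(\overline{G})\ge n-2$, i.e.\ $\Delta(G)\le 1$.

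\emph{Case 2: $\delta(G)\le n-3$ and $\Delta(G)\ge 2$.} Then $\Delta(\overline{G})=n-1-\delta(G)\ge 2$, and by symmetry both $G$ and $\overline{G}$ have maximum degree at least $2$ and at most $n-1$. I would like to use Proposition~\ref{prop:Delta}, but it needs $\Delta\le n-3$, which need not hold. So I instead prove directly the lemma
\[
\bg'(H)\le n-\Delta(H)+1 \quad \text{for every graph } H \text{ on } n \text{ vertices.}
\]

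\emph{Proof of the lemma.} Let $v$ be a vertex of maximum degree. If Staller's first move is not $v$, Burner plays $v$ in round 2, unless $v$ is already burning. If Staller plays $v$ herself, its neighbours burn even earlier. In either case, all $\Delta$ neighbours of $v$ are burning by the end of round 3, so at least $\Delta+3$ vertices are burned within 3 rounds (modulo overlaps). Each later round burns at least one more vertex. Hence the game ends by round roughly $n-\Delta+1$; the exact count needs checking.

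\emph{Applying the lemma.} Adding the bounds for $G$ and $\overline{G}$ gives
\[
\bg'(G)+\bg'(\overline{G})\le 2n+2-\Delta(G)-\Delta(\overline{G})=n+3-\Delta(G)+\delta(G).
\]
This is at most $n+2$ unless $G$ is regular. The regular case is the main obstacle, where this bound loses one. For it I would sharpen the lemma to $\bg'(H)\le n-\Delta(H)$ when $2\le\Delta(H)\le n-3$: after the fire from $v$, Staller's and Burner's later choices plus the spreading add at least one vertex per round, and spreading from an earlier-burned vertex usually gains an extra vertex. Alternatively, Proposition~\ref{prop:Delta} applies directly when both $\Delta(G),\Delta(\overline{G})\le n-3$, giving sum $\le 2n-\Delta(G)-\Delta(\overline{G})=n+1-\Delta(G)+\delta(G)\le n+1$. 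The remaining subcase, where $\Delta(G)\ge n-2$ while $\delta(G)\le n-3$, is handled by combining $\bg(G)=2$-type arguments with Proposition~\ref{prop:radius}: a vertex of degree at least $n-2$ gives $\rad(G)\le 2$, so $\bg'(G)\le 4$. For the complement, a vertex of degree at most $1$ in $\overline{G}$ together with $\bg'(\overline{G})\le n-2$ then closes the bound, with small $n$ checked by hand.
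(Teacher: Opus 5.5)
\paragraph{The auxiliary lemma is false as stated.} Your lemma $\bg'(H)\le n-\Delta(H)+1$ does not hold for every graph $H$. For $H=K_{1,n-1}$ with $n\ge 4$ the right-hand side is $2$. But $\delta(H)=1<n-2$, so Proposition~\ref{prop:characterize-1-2}(4) gives $\bg'(H)=3$. Your counting does work when $\Delta(H)\le n-2$. It breaks exactly when $H$ has a universal vertex $v$: Staller can open on a neighbour of $v$, so $v$ only starts spreading in round $3$. Case 2 contains such graphs. So the displayed inequality $\bg'(G)+\bg'(\overline{G})\le n+3-\Delta(G)+\delta(G)$ is not established there, and it is in fact false: for $G=K_{1,n-1}$ it gives $5$, while the true sum is $3+3=6$. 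Of the alternatives you list, only applying Proposition~\ref{prop:Delta} to both $G$ and $\overline{G}$ when $\Delta(G),\Delta(\overline{G})\le n-3$ is sound. Everything therefore rests on the leftover subcase $\Delta(G)\ge n-2$, after swapping $G$ and $\overline{G}$ if necessary.

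\paragraph{The leftover subcase is where the real gap is.} First, $\rad(G)\le 2$ fails if the unique non-neighbour $w$ of the maximum-degree vertex $v$ is isolated. Then $G$ is disconnected and Proposition~\ref{prop:radius} does not apply. A direct strategy repairs this and even gives $\bg'(G)\le 3$: Burner answers Staller's first move with $v$ if it is unburned, and with $w$ otherwise.

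More seriously, $\bg'(\overline{G})\le n-2$ is only asserted. The vertex of degree at most $1$ in $\overline{G}$ that you invoke plays no role in it. What actually yields it is the other Case 2 hypothesis, $\Delta(\overline{G})=n-1-\delta(G)\ge 2$:
\begin{itemize}
\item if $\Delta(\overline{G})\le n-3$, Proposition~\ref{prop:Delta} gives $\bg'(\overline{G})\le n-2$;
\item if $\Delta(\overline{G})\ge n-2$, the same direct strategy gives $\bg'(\overline{G})\le 3\le n-2$ once $n\ge 5$.
\end{itemize}
For $n=4$ the bound $\bg'(\overline{G})\le 2$ genuinely fails. You must then check by hand the three complementary pairs: $P_4$ with itself, $K_{1,3}$ with $K_3\cup K_1$, and $P_3\cup K_1$ with a triangle plus a pendant vertex. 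Each has sum $6$.

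\paragraph{Comparison with the paper.} With these repairs your argument is a valid alternative to the paper's. The paper splits only on $\Delta(G)$. For $\Delta(G)\le n-3$ it observes that the last vertex selected in an optimal Staller-start game on $\overline{G}$ is adjacent in $G$ to all earlier selections. Hence $\Delta(G)\ge \bg'(\overline{G})-2$, and Proposition~\ref{prop:Delta} is needed for $G$ alone. For $\Delta(G)\ge n-2$ the paper pairs $\bg'(G)\le 3$ with $\bg'(\overline{G})\le n-1$. Your route applies Proposition~\ref{prop:Delta} to both graphs, which is simpler in the middle range. The price is the stronger bound $n-2$ in the extreme case and the separate $n=4$ check.
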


\begin{proof}
    As $n \geq 2$, $\bg'(G) + \bg'(\overline{G}) \geq 4$. If $G$ is a complete graph, then $\bg'(G) + \bg'(\overline{G}) = n+2$. If $G$ is not complete but it has a universal vertex $u$, then $\bg'(G) \leq 3$ and $\overline{G}$ contains at least one component of order at least 2. If $n=3$, then $G = P_3$ and $\bg'(G) + \bg'(\overline{G}) = 2 + 3 = n+2$. If $n \geq 4$, then $\bg'(\overline{G}) \leq n-1$ as $\overline{G}$ contains at least one component of order at least 2, so $\bg'(G) + \bg'(\overline{G}) \leq n+2$.

    If $\Delta(G) = n-2$, then let $v$ be a vertex of maximum degree in $G$ and let $w \in V(G) - N[v]$. Clearly, $\bg'(G) \leq 3$. On $\overline{G}$, if Staller starts on $\{v,w\}$, then Burner plays on $N(v)$, thus at most one of $\{v,w\}$ is played during the game (if it lasts at least 3 rounds), while if Staller starts on $N(v)$, then Burner plays $v$ in round 2, again ensuring that at most one of $\{v,w\}$ is played during the game. Thus $\bg'(\overline{G}) \leq \max\{3, n-1\}$. This means that $\bg'(G) + \bg'(\overline{G}) \leq \max\{6, n+2\}$. For $n \geq 4$ the proof in this case is complete. If $n = 3$, then $G$ is a disjoint union of $K_2$ and $K_1$, $\overline{G}$ is $P_3$, so $\bg'(G) + \bg'(\overline{G}) = 3 + 2 = 5 = n + 2$. If $n = 2$, then $G$ is a disjoint union of two $K_1$s and $\overline{G}$ is $K_2$, thus $\bg'(G) + \bg'(\overline{G}) = 2 + 2 = 4 = n + 2$.
    
    Now suppose that $\Delta(G) \leq n-3$. Let $x_1, \ldots, x_k$ be vertices played in an optimal Staller-start burning game on $\overline{G}$. Then $\bg'(\overline{G}) \leq k+1$. As $x_k$ was a legal move, it cannot be adjacent to $x_i$ (in $\overline{G}$) for any $i \in [k-1]$, thus $\Delta(G) \geq \deg_{G}(x_k) \geq k-1$. By Proposition \ref{prop:Delta}, $\bg'(G) \leq n - \Delta(G) \leq n - (k-1)$. Thus $\bg'(G) + \bg'(\overline{G}) \leq n - k + 1 + k+1 = n+2$.
\end{proof}

The upper bound in Theorem \ref{thm:sum-S-game} is tight for example for complete graphs. Note that the lower bound is achieved for $G=K_2$. However, using Proposition \ref{prop:characterize-1-2} and a simple case analysis, one can deduce that if $n \geq 5$, then $\bg'(G) + \bg'(\overline{G}) \geq 6$, which is tight for example for $G = K_{1, n-1}$.

\begin{theorem}
    \label{thm:product-S-game}
    If $G$ is a graph of order $n \geq 6$, then $$ 8 \leq \bg'(G) \bg'(\overline{G}) \leq 3n-6.$$
\end{theorem}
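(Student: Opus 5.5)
The plan follows the structure of the proofs of Theorems~\ref{thm:product_gen} and~\ref{thm:sum-S-game}, splitting into cases by $\Delta(G)$ and using symmetry between $G$ and $\overline{G}$.

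\textbf{Lower bound.} By Proposition~\ref{prop:characterize-1-2}, $\bg'(H)=2$ for $H \neq K_1$ only when $\delta(H)\ge |V(H)|-2$; disconnected graphs on $n\ge 3$ vertices also have $\bg' \ge 2$. Suppose $\bg'(G)=\bg'(\overline{G})=2$.
\begin{itemize}
\item If $G$ is connected, then $\delta(G)\ge n-2$, so $\Delta(\overline{G})\le 1$. Hence $\overline{G}$ is a matching plus isolated vertices, on $n\ge 6$ vertices.
\item In that matching graph, Staller plays a vertex, Burner plays any vertex, and Staller plays a vertex in an untouched component. Since $n \ge 6$ such a component exists, so $\bg'(\overline{G})\ge 3$ or more.
\item If $G$ is disconnected, then $\overline{G}$ is connected, and the same argument applies with the roles swapped.
\end{itemize}
So $\bg'(G)\bg'(\overline{G})\ge 2\cdot 4=8$, or at least $\ge 3\cdot 3 > 8$. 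More carefully: any disconnected graph on $n\ge 6$ vertices has $\bg'\ge 3$, and one factor being $2$ forces the other graph to be near-empty with $\bg'\ge 4$. I will check this small case analysis directly. It holds because the matching graph has at least three components when $n \ge 6$, and Staller keeps opening fresh components.

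\textbf{Upper bound, $\Delta(G)\ge n-2$ or $\Delta(\overline G)\ge n-2$.} By symmetry, let $G$ have a vertex $v$ of degree at least $n-2$.
\begin{itemize}
\item If $v$ is universal, then $\bg'(G)\le 3$ (Staller plays, and Burner plays $v$ if it is unburned). Also $\overline{G}$ has $v$ isolated. Every nontrivial component of $\overline{G}$ loses at least one vertex to spreading, so $\bg'(\overline{G})\le n-1$ unless $G$ is complete. If $G=K_n$, the product is $2n \le 3n-6$.
\item Otherwise $3(n-1)$ is too large, so I need $\bg'(\overline G)\le n-2$. Whenever $\overline G$ has either two nontrivial components or a component with $\ge 3$ vertices, at least two vertices burn by spreading, giving $\bg'(\overline{G}) \le n-2$. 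The leftover case, $\overline G = K_2 \cup (n-2)K_1$, has $\bg'(\overline G)=n-1$ and $G=K_n$ minus an edge with $\bg'(G)=2$, so the product is $2n-2$.
\item For $\Delta(G)=n-2$, the argument in Theorem~\ref{thm:sum-S-game} gives $\bg'(G)\le 3$ and $\bg'(\overline G)\le n-1$. I must again improve this to $n-2$, or show $\bg'(G)=2$, by a similar component count.
\end{itemize}
This is where I expect the fiddly work.

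\textbf{Upper bound, both $\Delta(G),\Delta(\overline G)\le n-3$.} Here both graphs have minimum degree $\ge 2$.
\begin{itemize}
\item If one of them, say $\overline{G}$, is disconnected, Lemma~\ref{lem:disconnected-3} gives $\bg'(\overline G)\le \frac{n+1}{2}$. Then $\diam(G)\le 2$, so Proposition~\ref{prop:radius} gives $\bg'(G)\le 3$. The product is at most $\frac{3(n+1)}{2}\le 3n-6$ for $n\ge 5$.
\item If both are connected, assume $\bg'(G)\ge\bg'(\overline G)$. If $\diam(\overline G)\ge 3$, then $\diam(G)\le 3$ and both values are $\le 4$, giving product $\le 16\le 3n-6$ for $n\ge 8$.
\item Otherwise $\bg'(\overline G)\le 3$ by $\diam(\overline G)\le 2$. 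Proposition~\ref{prop:radius} gives $\bg'(G)\le \rad(G)+2\le n/2+2$, so the product is $\le 3n/2+6\le 3n-6$ for $n\ge 8$.
\end{itemize}
The remaining orders $n=6,7$ would be handled by a computer check, as in Theorem~\ref{thm:product_gen}, or by a finer use of Proposition~\ref{prop:k-dom-connected}.

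The main obstacle is the $\Delta(G)=n-2$ subcase of the upper bound, where the crude $3(n-1)$ must be sharpened to $3n-6$ by a careful strategy on $\overline G$.
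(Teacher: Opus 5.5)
\textbf{Verdict: genuine gap.} Your lower bound is essentially the paper's argument. If one value is $2$, that graph is connected with $\delta\ge n-2$, so the other graph is a union of $K_2$'s and $K_1$'s, in which at most $3+2=5<n$ vertices can be burning after round $3$. The upper bound, however, has a genuine gap: you leave open the subcase $\Delta(G)=n-2$ with no universal vertex in $G$ or $\overline{G}$. There, the bound $3(n-1)$ inherited from Theorem~\ref{thm:sum-S-game} is too weak.

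The missing idea is to organize by connectivity rather than by $\Delta$, as the paper does. It treats $G$ disconnected (split according to isolated vertices and components of order at most $2$) versus $G$ and $\overline{G}$ both connected, swapping roles if only $\overline{G}$ is disconnected. In your leftover subcase $G$ is connected, since a disconnected $G$ with $\Delta(G)=n-2$ has an isolated vertex, which would be universal in $\overline{G}$.
\begin{itemize}
\item If $\overline{G}$ is also connected, your own diameter argument from the last case never uses $\Delta\le n-3$ and applies verbatim, with the computer check for small $n$.
\item If $\overline{G}$ is disconnected, it has no isolated vertex and $\diam(G)\le 2$, so $\bg'(G)\le 3$. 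Then one of three things holds:
\begin{itemize}
\item every component of $\overline{G}$ has order at least $3$, and Lemma~\ref{lem:disconnected-3} applies;
\item $\overline{G}$ is a perfect matching, so $G$ is $K_n$ minus a perfect matching and $\bg'(G)=2$;
\item $\overline{G}$ has a $K_2$ component and a component of order at least $3$, and you need an explicit Burner strategy to get $\bg'(\overline{G})\le n-2$.
\end{itemize}
\end{itemize}

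That strategy is also missing from your universal-vertex case. There, ``at least two vertices burn by spreading, giving $\bg'(\overline{G})\le n-2$'' is not justified. The count yields $n-2$ only if those vertices burn before the final round, and that depends on play. For instance, in $P_3\cup 3K_1$, if Burner delays burning the middle vertex of $P_3$ until round $4$, Staller burns isolated vertices and the game lasts $5=n-1$ rounds. Burner must therefore enter a component $C$ of order at least $3$ at his first opportunity, via a vertex with two unburned neighbours. If Staller enters $C$ first, he must stay out of $C$ for one round. This is the paper's Case 2a.

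Moreover, your claimed bound is false for $\overline{G}=2K_2\cup 2K_1$ (with $n=6$). Staller first burns an isolated vertex and can then always leave Burner to open the last $K_2$ in round $4$, so the game ends by spreading in round $5=n-1$. The product is still fine only because $G$ is then $K_6$ minus a matching, so $\bg'(G)=2$. In general, when $\overline{G}$ is a union of $K_2$'s and $K_1$'s, combine $\bg'(G)=2$ with $\bg'(\overline{G})\le n-1$. (The paper's Case 2b asserts $\bg'\le n-2$ for exactly this kind of graph and needs the same repair.)
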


\begin{proof}
    We first prove the lower bound. Without loss of generality, assume that $\bg'(G) \geq \bg'(\overline{G})$. If $\bg'(\overline{G}) = 2$, then by Proposition \ref{prop:characterize-1-2} (4.), $G$ is a disjoint union of $K_2$s and $K_1$s. Thus, since $n \geq 6$, $\bg'(G) \geq 4$, which gives $\bg'(G) \bg'(\overline{G}) \geq 8$. If $\bg'(\overline{G}) \geq 3$, then $\bg'(G) \bg'(\overline{G}) \geq 9$.

    The rest of the proof is devoted to the upper bound.

    \begin{description}
        \item[Case 1.] $G$ is not connected and has at least $n-2$ isolated vertices.\\
        If $G = \overline{K_n}$, then $\bg'(G) = n$ and $\bg'(\overline{G}) = 2$, so $\bg'(G) \bg'(\overline{G}) = 2n \leq 3n-6$ as $n \geq 6$. If $G$ consists of isolated vertices and exactly one copy of $K_2$, then $\bg'(G) = n-1$ and $\bg'(\overline{G}) = 2$ by Proposition \ref{prop:characterize-1-2} (4.), so $\bg'(G) \bg'(\overline{G}) \leq 2(n-1) \leq 3n-6$ as $n \geq 4$.

        \item[Case 2.] $G$ is not connected, has at most $n-3$ isolated vertices and has at least one component of order at most two.\\
        Since $G$ has a component of order at most two, $\diam(\overline{G}) \leq 2$, thus $\bg'(\overline{G}) \leq 3$.
        Since $G$ has at most $n-3$ isolated vertices, it either has a component of order at least 3 or at least two $K_2$s as components.
        \begin{description}
            \item[Case 2a.] $G$ has a component $C$ of order at least 3.\\
            If Burner can play on $C$ first, then he can select a vertex in $C$ with at least two unburned neighbors, thus $\bg'(G) \leq n-2$. If Staller plays on $C$ first, then since Burner was not able to play on $C$ first, there is a different component still available for Burner to play on. Because Burner can avoid playing on $C$ for at least one more round, Staller's move on $C$ causes fire to spread to at least two additional vertices, thus again $\bg'(G) \leq n-2$.
    
            \item[Case 2b.] $G$ has at least two $K_2$s as components.\\
            Whenever an endpoint from $K_2$ is played, the other endpoint is burned in the next round. Since there are at least two $K_2$s as components, we have $\bg'(G) \leq n-2$.
        \end{description}

        In both cases we get $\bg'(G) \bg'(\overline{G}) \leq 3 (n-2) = 3n-6$.

        \item[Case 3.] $G$ is not connected and all of its components are of order at least 3.\\
       As in the proof of Theorem \ref{thm:product_gen} we see that $\diam(\overline{G}) \leq 2$ thus $\bg'(\overline{G}) \leq 3$. Together with Lemma \ref{lem:disconnected-3} this gives $\bg'(G) \bg'(\overline{G}) \leq 3\frac{n+1}{2} \leq 3n-6$ as $n \geq 5$. 

        \item[Case 4.] $G$ and $\overline{G}$ are both connected.\\
        Without loss of generality we may assume that $\bg'(G) \geq \bg'(\overline{G})$. If $\diam(\overline{G}) \geq 3$, then $\diam(G) \leq 3$, thus $\bg'(G) \leq 4$ by Proposition \ref{prop:radius}. By the assumption this means that $\bg'(\overline{G}) \leq 4$ as well, so $\bg'(G) \bg'(\overline{G}) \leq 16 \leq 3n-6$ for every $n \geq 7$. 

        Otherwise, $\diam(\overline{G}) \leq 2$, thus by Proposition \ref{prop:radius} we have $\bg'(\overline{G}) \leq 3$. Since $G$ is connected and $\rad(G) \leq \floor{\frac{n}{2}}$, Proposition \ref{prop:radius} gives $\bg'(G) \leq \frac{n}{2} + 2$. Thus $\bg'(G) \bg'(\overline{G}) \leq 3 (\frac{n}{2} + 2) = \frac{3n}{2} + 6 \leq 3n - 6$ for every $n \geq 8$. 
        
        If $n\in \{6,7,8\}$ and $G, \overline{G}$ are both connected, it can be checked with the help of a computer that the desired bounds hold. \hfill \qedhere
    \end{description}
\end{proof}

It follows from the proof of Theorem \ref{thm:product-S-game} that the lower bound is attained only if $n = 6$ and one of the graphs is a disjoint copy of three $K_2$s. Otherwise, the proof produces a lower bound of 9, which is best possible since for $G = K_{1,n}$ we get $\bg'(G) = 3$ and $\bg'(\overline{G}) = 3$. The upper bound is best possible as shown by $G$ being a disjoint union of $P_3$ and $n-3$ copies of $K_1$ (in this case, $\bg'(G) = n-2$ and $\bg'(\overline{G}) = 3$).

Similarly as before we can obtain a better upper bound if both $G$ and $\overline{G}$ are connected. Combining Proposition \ref{prop:k-dom-connected} with the bound $\gamma_k(G) \leq \frac{n}{k+1}$ for all connected graphs on $n \geq k+1$ vertices from \cite{meir1975upper} gives $\bg'(G) \leq \frac{n}{k+1} + 3k + 1$ if $n \geq k+1$. Applying this bound for $k=2$ and $n \geq 3$ to the case where $\diam(\overline{G}) \le 2$, we obtain $\bg'(G) \bg'(\overline{G}) \leq 3 \left( \frac{n}{3} + 7 \right) = n+21$. This gives the following result.

\begin{corollary}
    \label{cor:product-connected-staller}
    If $G$ and $\overline{G}$ are both connected on $n \geq 3$ vertices, then $\bg'(G) \bg'(\overline{G}) \leq n+21$.
\end{corollary}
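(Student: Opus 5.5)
The plan is to follow the derivation sketched just before the statement, treating it as a modification of Case 4 of Theorem~\ref{thm:product-S-game}. Assume $G$ and $\overline{G}$ are both connected on $n\ge 3$ vertices, and by symmetry suppose $\bg'(G)\ge \bg'(\overline{G})$. We split on $\diam(\overline{G})$.

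\textbf{Case $\diam(\overline{G})\ge 3$.} Then $\diam(G)\le 3$, so Proposition~\ref{prop:radius} gives $\bg'(G)\le \diam(G)+1\le 4$. By the symmetry assumption also $\bg'(\overline{G})\le 4$. Hence $\bg'(G)\bg'(\overline{G})\le 16\le n+21$, which holds for every $n\ge 3$.

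\textbf{Case $\diam(\overline{G})\le 2$.} Proposition~\ref{prop:radius} gives $\bg'(\overline{G})\le \diam(\overline{G})+1\le 3$.
\begin{itemize}
\item For $G$, apply Proposition~\ref{prop:k-dom-connected} with $k=2$, which is allowed since $G$ is connected: $\bg'(G)\le \gamma_2(G)+7$.
\item Since $G$ is connected and $n\ge 3=k+1$, the Meir--Moon bound from \cite{meir1975upper} gives $\gamma_2(G)\le n/3$.
\end{itemize}
Therefore $\bg'(G)\bg'(\overline{G})\le 3(n/3+7)=n+21$.

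The case split uses the standard fact that if $\diam(\overline{G})\ge 3$ then $\diam(G)\le 3$. Note also that the symmetry assumption is only needed in the first case. In the second case we bound the two factors directly, and the argument does not depend on which graph has the larger Staller-start game burning number.

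The main point to check is that Proposition~\ref{prop:k-dom-connected} really holds with the uniform additive constant $3k+1$. Its proof gives $\bg'(G)\le 2\gamma(H)+3k+1$ together with $\gamma(H)\le \ell/2$, where $H$ is the auxiliary graph on the $k$-dominating set, and this yields $\gamma_k(G)+3k+1$. For $\ell=1$ the bound $k+2\le 3k+2$ is immediate. This is given earlier in the paper, so it can be cited without reproof.

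No small-case computer check is needed, because both cases give the bound for all $n\ge 3$.
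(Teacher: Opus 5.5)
Your proposal is correct and is essentially the paper's argument: you rerun Case 4 of Theorem~\ref{thm:product-S-game}, keeping the $\diam(\overline{G})\ge 3$ subcase as is, and in the $\diam(\overline{G})\le 2$ subcase you replace the radius bound on $\bg'(G)$ by Proposition~\ref{prop:k-dom-connected} with $k=2$ together with the Meir--Moon bound $\gamma_2(G)\le n/3$. You are also right that no small-case computer check is needed, since $16\le n+21$ and $3(n/3+7)=n+21$ both hold for every $n\ge 3$.
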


\section{Graph products}
\label{sec:products}

In this section we explore the burning game on graph products. Recall that general bounds for the burning number of Cartesian, strong and lexicographic products were explored in 
\cite{mitsche2018products}, Cartesian and strong grids were studied first in \cite{mitsche2017probabilistic} and later also in \cite{bonato2021fence}, and the burning of hypercubes was actually resolved under a different name already in 1992 by Alon \cite{alon1992transmitting}. The cooling number of square grids was given in \cite{bonato2024cool}. We start by recalling the definitions. 

The \emph{Cartesian product} $G \cart H$ of graphs $G=(V(G),E(G))$ and $H=(V(H),$ $E(H))$
has vertex set $V(G)\times V(H),$ and vertices $(u,v),(x,y)$ are adjacent whenever $u=x$ and $vy\in
E(H)$, or $ux\in E(G)$ and $v=y$. 

The \emph{strong product}  $G \strongprod H$ of graphs $G=(V(G),E(G))$ and $H=(V(H),$ $E(H))$
has vertex set $V(G)\times V(H),$ and vertices $(u,v),(x,y)$ are adjacent whenever $u=x$ and $vy\in
E(H)$, or $ux\in E(G)$ and $v=y$, or $ux \in E(G)$ and $vy \in E(H)$.

The \emph{lexicographic product}  $G[H]$ of graphs $G=(V(G),E(G))$ and $H=(V(H),$ $E(H))$
has vertex set $V(G)\times V(H),$ and vertices $(u,v),(x,y)$ are adjacent whenever $ux \in E(G)$, or
$u=x$ and $vy \in E(H)$.

Let $G$ and $H$ be arbitrary graphs, and $v \in V(H)$. We refer to the set $V(G) \times \{v\}$ as a \emph{$G$-layer}. Similarly, the set $\{u\} \times V(H)$ for $u \in V(G)$ is an \emph{$H$-layer}. When referring to a specific $G$- or $H$-layer, we denote them by $G^v$ or $^uH$, respectively. Layers can also be regarded as the graphs induced on these sets. Obviously, in the Cartesian, strong and lexicographic products, a $G$-layer or $H$-layer is isomorphic to $G$ or $H$, respectively.

The \emph{corona product} of two graphs $G$ and $H$, denoted by $G \circ H$, is defined as the graph obtained by taking one copy of $G$ and $|V(G)|$ copies of $H$ and joining the $i$-th vertex of $G$ to every vertex in the $i$-th copy of $H$.

We begin with a general result that is particularly useful for analyzing products of graphs. Recall that a \textit{homomorphism} from a graph $G$ to a graph $H$ is a map $\varphi : V(G) \mapsto V(H)$ such that whenever $uv \in E(G)$, we have $\varphi(u)\varphi(v) \in E(H)$.  If in fact $H$ is a subgraph of $G$ (and $\varphi$ restricted to $V(H)$ is the identity map), then we call $\varphi$ a \textit{retraction} and say that $H$ is a \textit{retract} of $G$.

In general, the game burning number of a graph $G$ can be much smaller than the game burning number of a subgraph $H$, since $G$ may contain some additional structure that facilitates burning the graph.  However, this cannot happen if $H$ is a retract of $G$, as we next show.  Recall that for a graph $G$ and $S \subseteq V(G)$, we denote by $G \vert S$ the graph $G$ with the vertices in $S$ regarded as already burning.

\begin{theorem}\label{thm:retract} 
For graphs $G$ and $H$, if $H$ is a retract of $G$ with retraction $\varphi$, then for all $S \subseteq V(G)$ we have $\bg(H \vert \varphi(S)) \le \bg(G \vert S)$ and $\bg'(H \vert \varphi(S)) \le \bg'(G \vert S)$.  In particular, $\bg(H) \le \bg(G)$ and $\bg'(H ) \le \bg'(G)$.
\end{theorem}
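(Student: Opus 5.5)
We prove both inequalities together by a strategy-stealing (imagination) argument, with induction as a fallback. The key observation is that a retraction does not increase distances: since $\varphi$ maps edges to edges (and the identity on $V(H)$ means adjacent vertices of $H$ stay adjacent), $d_H(\varphi(u),\varphi(v)) \le d_G(u,v)$. Consequently, if a set $B \subseteq V(G)$ is burning and the fire spreads for one round in $G$, the resulting set $B'$ satisfies $\varphi(B') \subseteq N_H[\varphi(B)]$. In other words, the image of the burning set in $G$ is always contained in the burning set of a suitably coupled game on $H$.

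We show $\bg(H\vert\varphi(S)) \le \bg(G\vert S)$ by giving Burner a strategy on $H$ that lasts at most $\bg(G\vert S)$ rounds against any Staller play; the Staller-start case is identical. Burner plays an imaginary game on $G\vert S$ using an optimal Burner strategy there, and maintains the invariant that after each round the burning set $B_H$ in the real game on $H$ contains $\varphi(B_G)$, where $B_G$ is the burning set of the imaginary game. The invariant holds initially. The spreading phase preserves it by the observation above.

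In the selection phases the moves are translated as follows.
\begin{itemize}
\item When Burner's imaginary strategy selects $x \in V(G)$, he plays $\varphi(x)$ in $H$ if it is unburned. Otherwise he plays an arbitrary unburned vertex; extra burning vertices only help, by the Continuation Principle-style monotonicity, and we just keep the invariant as a containment.
\item When Staller plays $y \in V(H)$ in the real game, Burner copies it into the imaginary game as the move $y \in V(H)\subseteq V(G)$. This is legal provided $y \notin B_G$. If $y \in B_G$, then $y = \varphi(y) \in \varphi(B_G) \subseteq B_H$, contradicting the legality of Staller's move in $H$. So the copied move is always legal, and since $\varphi(y)=y$ the invariant is preserved.
\end{itemize}

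The imaginary game ends within $\bg(G\vert S)$ rounds, and at that point $\varphi(B_G)=\varphi(V(G))=V(H)$, because $\varphi$ is surjective onto $V(H)$ (it is the identity there). Hence the real game has ended by then too.

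The main obstacle is handling rounds where the real game runs out of moves or the imaginary game has ended. If the real game ends earlier, that is fine. If Burner's imaginary move is already burned in $H$, the containment invariant is only strengthened. One subtlety: the invariant must be a containment and not an equality, and Staller in the imaginary game may face different legal options than in $H$. Because Staller's moves only come from $H$ and are translated directly, this causes no trouble. The "in particular" statement follows by taking $S=\emptyset$.
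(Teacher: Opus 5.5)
Your proposal is correct and follows essentially the same imagination-strategy argument as the paper. Burner follows an optimal strategy on $G \vert S$ and plays the $\varphi$-images of its moves. He copies Staller's moves verbatim, which is legal because $\varphi$ fixes $V(H)$. The invariant is that the burned set in $H$ contains the $\varphi$-image of the burned set in $G$, maintained by the homomorphism property during spreading, with surjectivity of $\varphi$ finishing the argument. The only minor difference is that, when $\varphi(x)$ is already burning, you let Burner play an arbitrary legal vertex instead of ``passing'' as the paper does; this keeps the containment invariant directly and so needs no appeal to the Continuation Principle.
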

\begin{proof}
We prove that $\bg(H \vert \varphi(S)) \le \bg(G \vert S)$; the proof that $\bg'(H \vert \varphi(S)) \le \bg'(G \vert S)$ is similar.  To this end, we give a Burner strategy for the burning game on $H$ that will let Burner ensure that the game on $H$ lasts at most $\bg(G \vert S)$ rounds.  Loosely, Burner will use an optimal strategy on $G$ to guide his play on $H$.

Burner will play two games simultaneously: the ``real'' game on $H \vert \varphi(S)$ and an imagined game on $G \vert S$.  Burner plays on $H \vert \varphi(S)$ as follows.  Let $\varphi$ be a retraction from $G$ to $H$.  On each Burner turn in the game on $H \vert \varphi(S)$, Burner chooses an optimal move for the imagined game on $G \vert S$, say $v$; Burner then plays $v$ on $G$ and plays $\varphi(v)$ on $H$.  (If $\varphi(v)$ is already burned in the game on $H$, then to simplify the analysis, we suppose that Burner ``passes'' and burns nothing; by the Continuation Principle, this cannot shorten the length of the game, so the upper bound we obtain for $\bg(H \vert \varphi(S))$ will still be valid.)  Suppose instead that it is Staller's turn and that Staller plays vertex $w$ on $H$; Burner imagines that Staller likewise plays $w$ in the game on $G$.  (If in fact $w$ has already been burned on $H$, then Burner imagines that Staller plays any valid move; however, we will argue later that this scenario can never actually arise.)

At each point during the game, let $S_G$ and $S_H$ denote the sets of burned vertices in the game on $G \vert S$ and the game on $H \vert \varphi(S)$, respectively.  We claim that always $S_H \supseteq \varphi(S_G)$.  By surjectivity of $\varphi$, it will then follow that by the time $V(G)$ has completely burned, so has $V(H)$, and so the game on $H \vert \varphi(S)$ lasts no longer than the game on $G \vert S$.  Since Burner follows an optimal strategy in the game on $G \vert S$, the length of the game on $G \vert S$ -- and hence also the length of the game on $H \vert \varphi(S)$ -- is at most $\bg(G \vert S)$, i.e. $\bg(H \vert \varphi(S)) \le \bg(G \vert S)$.

At the outset of the game, we have $S_G = S$ and $S_H = \varphi(S)$, hence $S_H \supseteq \varphi(S_G)$ as needed.  To show that in fact $S_H \supseteq \varphi(S_G)$ at all times, we must argue that this property is maintained when Burner selects vertices, when Staller selects vertices, and when fire spreads.  

\begin{itemize}
\item First, consider the selection of vertices by Burner.  Suppose that during some round of the game, Burner selects vertex $v$ in $G$ and thus selects vertex $\varphi(v)$ in $H$ (or no vertex at all, if $\varphi(v)$ is already burning.)  The set of burned vertices after Burner's move is $S_H \cup \{\varphi(v)\}$, and clearly $S_H \cup \{\varphi(v)\} \supseteq \varphi(S_G) \cup \{\varphi(v)\} = \varphi(S_G \cup \{v\})$, so the property is maintained.

\item Next, consider the selection of vertices by Staller.  Because $S_H \supseteq \varphi(S_G)$, and because $\varphi$ is the identity on $V(H)$, whatever vertex Staller chooses on $H$ must not yet be burned in $G$; hence Staller plays the same vertex in $G$ as in $H$, which clearly preserves the property that $S_H \supseteq \varphi(S_G)$.

\item Finally, we must argue that the property is maintained in the course of a spreading phase.  Let $S_G$ and $S'_G$ denote the sets of burned vertices in $G$ before and after the spreading phase, respectively; likewise, let $S_H$ and $S'_H$ denote the sets of burned vertices in $H$ before and after the spreading phase.  Assume that $S_H \supseteq \varphi(S_G)$, i.e. that the desired property holds before fire spreads.  
Consider a vertex $x$ in $\varphi(S'_G)$; we will argue that $x \in S'_H$.  
If $x \in \varphi(S_G)$, then $x \in \varphi(S_G) \subseteq S_H \subseteq S'_H$.  Suppose instead that $x \in \varphi(S'_G) \setminus \varphi(S_G)$.  It follows that $x$ has some preimage $v$ such that $v \in S'_G \setminus S_G$.  Consequently, it must be that $v$ has some neighbor $w$ (in $G$) such that $w \in S_G$.  Since $v$ and $w$ are adjacent in $G$, and since $\varphi$ is a homomorphism, $\varphi(v)$ and $\varphi(w)$ must be adjacent in $H$.  Since $w \in S_G$, we have $\varphi(w) \in \varphi(S_G) \subseteq S_H$, i.e. $\varphi(w)$ is already burning in $H$ before the fire spreads; the fire will thus spread to $\varphi(v)$.  Hence $x = \varphi(v) \in S'_H$, as desired.
\end{itemize}

Thus at all times we have $S_H \supseteq \varphi(S_G)$, from which it follows that $\bg(H \vert \varphi(S)) \le \bg(G \vert S)$.
\end{proof}




\begin{proposition}
\label{cartesian-product}
Let $G$ and $H$ be connected graphs. Then
$$\max\{\bg(G),\bg(H)\}  \leq \bg(G \strongprod H) \leq \bg(G \cart H) \leq \min\{2b(G^2) - 1 + \rad(H), 2 b(H^2) - 1 + \rad(G)\}$$
and
$$\max\{\bg'(G),\bg'(H)\}  \leq \bg'(G \strongprod H) \leq \bg'(G \cart H) \leq \min\{2b(G^2) + \rad(H), 2 b(H^2) + \rad(G)\}.$$
\end{proposition}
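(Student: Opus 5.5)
The chain of inequalities splits into three independent pieces. We handle each one in turn, and the argument for $\bg'$ is identical, except for a shift by one round in the last step.

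\emph{Lower bound.} We show that $G$ and $H$ are retracts of $G \strongprod H$. Fix $v_0 \in V(H)$ and define $\varphi(u,v) = (u,v_0)$, which projects onto the layer $G^{v_0}$. If $(u,v)(x,y)$ is an edge of $G \strongprod H$, then either $u = x$, in which case both endpoints map to the same vertex, or $ux \in E(G)$, in which case the images are adjacent in $G^{v_0}$. The first case is a problem, because a homomorphism cannot send an edge to a single vertex. So the strong product does not directly give a retraction in the strict sense. We therefore first check how Theorem~\ref{thm:retract} uses the homomorphism property. In the spreading step it needs only that $\varphi(v)$ and $\varphi(w)$ are adjacent \emph{or equal}, and if they are equal then $x = \varphi(v) = \varphi(w) \in S_H$ already. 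So the proof of Theorem~\ref{thm:retract} goes through verbatim for weak homomorphisms, i.e.\ maps under which each edge goes to an edge or collapses to a single vertex. We state this extension explicitly and then apply it to the projection onto $G^{v_0} \cong G$, which is the identity on that layer. This yields $\bg(G) \le \bg(G \strongprod H)$, and symmetrically $\bg(H) \le \bg(G \strongprod H)$.

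\emph{Middle inequality.} The Cartesian product $G \cart H$ is a spanning subgraph of $G \strongprod H$, so Lemma~\ref{lem:spanning-subgraph} gives $\bg(G \strongprod H) \le \bg(G \cart H)$ directly.

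\emph{Upper bound.} We adapt the proof of Proposition~\ref{prop:trivial-bounds}. Let $k = b(G^2)$, let $x_1, \dots, x_k$ be an optimal source sequence for $G^2$, and let $c$ be a center of $H$. On his $i$th turn Burner plays $(x_i, c)$, or any unburned vertex if $(x_i,c)$ is already burning. Then $(x_i,c)$ is burning by round $2i-1$, or by round $2i$ in the Staller-start game.

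Now take any $(u,v)$. Some $x_i$ satisfies $d_G(u,x_i) \le 2(k-i)$. In $G \cart H$, distances add, so
\[ d\big((u,v),(x_i,c)\big) = d_G(u,x_i) + d_H(v,c) \le 2(k-i) + \rad(H). \]
Hence $(u,v)$ burns by round $2i - 1 + 2(k-i) + \rad(H) = 2k - 1 + \rad(H)$. The Staller-start game gives $2k + \rad(H)$ in the same way, and exchanging the roles of $G$ and $H$ gives the other term in the minimum.

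The only real obstacle is the observation in the lower bound that the layer projection is merely a weak homomorphism. We need to check carefully that every step of the proof of Theorem~\ref{thm:retract} survives this weakening; the spreading phase is the only step that uses adjacency.
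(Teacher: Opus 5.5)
Your proof is correct and follows the paper's argument: the lower bound via Theorem~\ref{thm:retract}, the middle inequality via Lemma~\ref{lem:spanning-subgraph}, and the upper bound by having Burner play an optimal $G^2$-burning sequence inside the layer $V(G)\times\{c\}$, where $c$ is a central vertex of $H$. Your per-vertex computation $d_G(u,x_i)+d_H(v,c)$ is just an unrolled form of the paper's ``burn that layer within $2k-1$ rounds, then $\rad(H)$ more rounds''. Your caveat about the lower bound is well founded. The paper simply asserts that $G$ and $H$ are retracts of $G \strongprod H$, which is false under its own strict definition of homomorphism; for example, $K_2 \strongprod K_2 \cong K_4$ admits no homomorphism onto $K_2$. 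Your check that the proof of Theorem~\ref{thm:retract} only needs each edge to map to an edge or collapse to a single vertex (a collapsed image already lies in $S_H$) is exactly what makes that step rigorous.
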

\begin{proof}
Since both $G$ and $H$ are retracts of $G \boxtimes H$, Theorem \ref{thm:retract} yields
$$b_g(G \boxtimes H) \ge \max\{b_g(G), b_g(H)\}.$$
The inequality $\bg(G \boxtimes H) \leq \bg(G \, \Box \, H)$ follows from Lemma~\ref{lem:spanning-subgraph} as the Cartesian product $G \, \Box \, H$ is a spanning subgraph of $G \boxtimes H$. 

To prove the upper bound, let $b(G^2) = k$, $\rad(H) = r$ and let $u \in V(H)$ be a central vertex. Let $x_1, \dots, x_k$ be an optimal sequence of sources for the burning process on $G^2$. Burner's strategy is to play $(x_1, u), \ldots, (x_k,u)$ in his first $k$ moves, and to play any legal move in the remaining rounds.  (If any $(x_i,u)$ burns before Burner plays it, then he plays any legal move instead.) It follows from the arguments used in the proof of Proposition \ref{prop:trivial-bounds} that after $2k-1$ rounds of the game, all vertices in $G^u$ are burned. Thus in the remaining $r$ rounds, all vertices of $G \cart H$ are burned. Thus $\bg(G \cart H) \leq 2k-1+r$. By symmetry we also have $\bg(G \cart H) \leq 2 b(H^2) - 1 + \rad(G)$.

The proofs of the bounds on $\bg'(G \strongprod H)$ and $\bg'(G \cart H)$ are similar. (The difference in the upper bound is due to the difference in the analogous bound in Proposition \ref{prop:trivial-bounds}.)
\end{proof}

Note that the above chain of bounds is very similar to the bounds known for the burning number. It was proven in \cite{mitsche2018products} that $\max\{b(G),b(H)\}  \leq b(G \strongprod H) \leq b(G \cart H) \leq \min \{ b(G) + \rad(H), b(H) + \rad(G) \}$.

\begin{example}
    Let $G$ and $H$ be graphs on at least two vertices, and with universal vertices $u_1$ and $u_2$, respectively. Proposition \ref{cartesian-product} gives $2 \leq \bg(G \strongprod H) \leq 4$. But if Burner starts the game by playing $(u_1, u_2)$, all vertices are burned by the end of the spreading phase of round 2, thus the lower bound is attained. 

\end{example}

\begin{example}
    Let $n \geq 3$ and consider $\bg(K_n \cart P_4)$. Proposition \ref{cartesian-product} gives $2 \leq \bg(K_n \cart P_4) \leq 4$. We will show that the upper bound is attained. Let $V(K_n) = [n]$ and $V(P_4) = [4]$. Without loss of generality Burner starts the game by playing a vertex $(1, i)$ where $i \in [2]$. If Staller responds by playing $(2, 2-i+1)$, vertices $(2,4), \ldots, (n,4)$ are not burned by the end of the spreading phase of round 3, thus as $n \geq 3$, at least 4 rounds are needed.
\end{example}

We next consider the burning game played on the $n$-dimensional hypercube $Q_n$, i.e. the $n$-fold Cartesian product $K_2 \cart K_2 \cart \dots \cart K_2$.  Alon~\cite{alon1992transmitting} proved that in the original burning process, burning $Q_n$ requires at least $\ceil{\frac{n}{2}}+1$ rounds; the burning game on $Q_n$ behaves quite similarly to this.  (In fact, Alon argued that for the burning process, if $n$ is even, then at least $\frac{n}{2}+1$ rounds are needed even if a pair of antipodal vertices may be burned in every round.)  

\begin{theorem}\label{thm:hypercubes}
For $n\geq 1$, 
\[\bg(Q_n) = \left\{\begin{array}{ll}2, &\text{ if } n \in \{1, 2\};\\\ceil{\frac{n+1}{2}}+1, \quad &\text{ otherwise}\end{array}\right . \quad \text{ and } \quad \bg'(Q_n) = \ceil{\frac{n}{2}}+1.\]
\end{theorem}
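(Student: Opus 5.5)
The plan is to prove matching upper and lower bounds. Vertices of $Q_n$ are binary strings of length $n$, distance is Hamming distance, and $\bar x$ denotes the antipode of $x$. Every ball $N_r[x]$ with $r\ge 0$ satisfies $N_r[x]\cup N_{n-r-1}[\bar x]=V(Q_n)$. The small cases are immediate. For $n\in\{1,2\}$ we have $\Delta(Q_n)\ge |V(Q_n)|-2$, so Proposition~\ref{prop:characterize-1-2} gives $\bg(Q_n)=2$. The Staller-start values for $n\in\{1,2\}$ can be read off directly. For $n=3$ the formula gives $3$, and Proposition~\ref{prop:characterize-1-2} shows $\bg(Q_3)\ge 3$ because $\Delta(Q_3)=3<6$.

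\textbf{Upper bounds (antipodal strategy).} In the Staller-start game, let Staller open with $x$. Burner answers in round 2 with $\bar x$, which is still unburned for $n\ge 2$. At the end of the spreading phase of round $t$, the fire from $x$ covers $N_{t-1}[x]$ and the fire from $\bar x$ covers $N_{t-2}[\bar x]$. These two balls cover $Q_n$ once $(t-1)+(t-2)\ge n-1$, i.e.\ $t\ge \frac{n}{2}+1$. This gives $\bg'(Q_n)\le \ceil{\frac n2}+1$.

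In the Burner-start game, Burner plays $x$ in round 1 and $\bar x$ in round 3. If Staller has burned $\bar x$ in round 2, this only helps Burner, by the Continuation Principle. Otherwise $\bar x$ is unburned, since $d(x,\bar x)=n\ge 3$. The two balls now have radii $t-1$ and $t-3$, so they cover $Q_n$ once $2t-4\ge n-1$. This gives $\bg(Q_n)\le \ceil{\frac{n+3}{2}}=\ceil{\frac{n+1}{2}}+1$.

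\textbf{Lower bounds.} Proposition~\ref{prop:trivial-bounds} gives $\bg'(Q_n)\ge b(Q_n)$ and $\bg(Q_n)\ge b(Q_n)$, and Alon's theorem gives $b(Q_n)\ge\ceil{\frac n2}+1$. This settles $\bg'$ completely. It also settles $\bg$ for odd $n$, because then $\ceil{\frac{n+1}{2}}+1=\ceil{\frac n2}+1$.

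The remaining case is $\bg(Q_n)\ge \frac n2+2$ for even $n\ge 4$. There Staller must gain one extra round. Her strategy is to always play wastefully: on each of her turns she selects an unburned vertex adjacent to a burning vertex. The ball such a source reaches is then contained in the ball already reached by the fire that will spread to it, so her sources contribute nothing. Consequently, at round $t=\frac n2+1$ the burned set is contained in
\[
N_{n/2}[x_1]\cup N_{n/2-2}[x_3]\cup N_{n/2-4}[x_5]\cup\cdots,
\]
where $x_1,x_3,\dots$ are Burner's sources. It then suffices to show that this union is never all of $Q_n$. I also need to check that a wasteful move always exists while the game is still running; this follows from connectivity.

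\textbf{Main obstacle.} The hard step is this covering claim. Simple volume counting fails, because $N_{n/2}[x_1]$ alone has about half the cube. My plan is to reduce to Alon's antipodal-pair statement: for even $n$, balls of radii $\frac n2-1,\frac n2-2,\dots,0$, each allowed to be doubled by its antipodal copy, do not cover $Q_n$. The uncovered part of $N_{n/2}[x_1]$ is exactly $N_{n/2-1}[\bar x_1]$, so it suffices to show that the balls $N_{n/2-2j}[x_{2j+1}]$ do not cover $N_{n/2-1}[\bar x_1]$.

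I would try first to find a retraction of $Q_n$ onto a smaller cube, or a folding that identifies antipodes, so that Alon's bound (or Theorem~\ref{thm:retract}) applies with one round's slack. This would be done by fixing the coordinates in which $x_3$ differs from $\bar x_1$. If that fails, the fallback is to redo Alon's polynomial/linear-algebra argument directly for the sequence of radii $\frac n2,\frac n2-2,\dots$.
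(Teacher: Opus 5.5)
Your upper bounds (Burner burning the antipode of the first source in round 2, resp.\ round 3), the small cases, and the lower bounds for $\bg'$ and for odd $n$ via Alon's theorem are the same as the paper's. The proposal has a genuine gap in the only hard case, $\bg(Q_n)\ge \frac n2+2$ for even $n$, and it fails in two places.

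First, your claim that a Staller source adjacent to a burning vertex ``contributes nothing'' is false in $Q_n$. Suppose she selects $v$ in round $s$ next to the fire of a source $x_a$ played in round $a$. Then $d(x_a,v)=s-a+1$, so in round $t$ the ball $N_{t-s}[v]$ reaches distance $t-a+1$ from $x_a$, one step beyond $N_{t-a}[x_a]$. In other words, a source on the fire front always advances it. Concretely, take $n=4$, $x_1=(0,0,0,0)$ and Staller's reply $(1,1,0,0)$. Then $(1,1,1,0)$ and $(1,1,0,1)$ are burning after the spreading phase of round 3, before $x_3$ is chosen, yet they lie outside $N_2[x_1]$. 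So the burned set is not contained in $N_{n/2}[x_1]\cup N_{n/2-2}[x_3]\cup\cdots$. Her round-2 move can never be wasted. Moreover, your rule does not say which front vertex she takes, so later she may even extend Burner's own fire around $\bar x_1$.

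Second, even for that reduced union you never prove the non-covering statement. You only list possible routes (a retraction or folding, or redoing Alon's polynomial argument). Since volume counting fails, this is the entire content of the case.

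The paper fills both holes with a specific Staller strategy. With $v_1=(0,\dots,0)$, she plays $v_2=(1,1,0,\dots,0)$. In round 4 she plays an unburned $v_4$ with exactly two $1$s among the first three coordinates and exactly two among the remaining $n-3$. Such a vertex exists since $3\binom{n-3}{2}>\binom{n-3}{2}+n-3$ for $n\ge 6$; the case $n=4$ is checked directly.

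Fixing the first three coordinates to $1,1,1$ (or to $0,1,1$ if $v_3$ begins with $1,1,1$) gives a subcube $H\cong Q_{n-3}$. After round 4, only $N_H[y]\cup\{z\}$ is burning in $H$, for three reasons:
\begin{itemize}
\item fire from $v_1,v_2$ enters $H$ through $y$, and fire from $v_3$ enters through $z$;
\item fire from $v_4$ reaches each vertex of $H$ no earlier than the fire already spreading from $y$;
\item later moves outside $H$ can be replaced by their projections onto $H$.
\end{itemize}
So after four real rounds, $H$ is only in the state of a burning process after two rounds. Alon's bound for the odd-dimensional $Q_{n-3}$ then forces at least $b(Q_{n-3})-2=\frac n2-2$ further rounds, hence $\frac n2+2$ in total.

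Your instinct to fix coordinates and pass to a smaller cube is the right one. It works, however, only once Staller's moves are chosen to be harmless on that particular subcube rather than globally.
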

\begin{proof}
It is easily seen that $\bg(Q_1) = \bg(Q_2) = 2$, so suppose $n \ge 3$.  We will view the vertices of $Q_n$ as ordered $n$-tuples with each coordinate in $\{0,1\}$, where two vertices are adjacent if and only if the corresponding $n$-tuples differ in exactly one coordinate.

We begin with the upper bounds.  Let $r = \ceil{\frac{n+1}{2}}+1$.  For the upper bound on $\bg(Q_n)$, we give a strategy for Burner to burn $Q_n$ in $r$ rounds.  Burner burns vertex $(0,\dots,0)$ in round 1 and vertex $(1,\dots,1)$ in round 3. (If vertex $(1,\dots,1)$ is already burned before Burner's second move, then he instead plays any legal move.) 
For the remainder of the game, Burner plays arbitrarily.  We claim that by the end of round $r$, all vertices of $Q_n$ will be burned.  Note that vertex $(0,\dots,0)$ was burned in round 1, so by the end of round $r$, all vertices within distance $r-1$ of $(0,\dots,0)$ will be burned.  Likewise, since vertex $(1,\dots,1)$ was burned on or before round 3, all vertices within distance $r-3$ of $(1,\dots,1)$ will be burned.  Consider an arbitrary vertex $v$ in $Q_n$, and let $k$ denote the number of coordinates of $v$ with value 1.  If $k \le r-1$ then $v$ has been burned due to its proximity to $(0,\dots,0)$.  Otherwise, we have $2r = 2(\ceil{(n+1)/2}+1) \ge n+3$; hence $k \ge r \ge n-(r-3)$, so the distance from $v$ to $(1,\dots,1)$ is at most $r-3$, and thus $v$ has been burned due to its proximity to $(1,\dots,1)$. 

A similar argument suffices to establish the upper bound on $\bg'(Q_n)$.  Let $r = \ceil{\frac{n}{2}}+1$.  By symmetry, we may assume that Staller burns vertex $(0,\dots,0)$ in round 1.  In round 2, Burner burns $(1,\dots,1)$.  For the remainder of the game, Burner plays arbitrarily.  By the end of round $r$, all vertices within distance $r-1$ of $(0,\dots,0)$ have been burned, as have all vertices within distance $r-2$ of $(1,\dots,1)$.  Given a vertex $v$ with $k$ coordinates equal to 1, either $k \le r-1$ or $k \ge r \ge n-r+2$; thus either $v$ is within distance $r-1$ of $(0,\dots,0)$ or it is within distance $r-2$ of $(1,\dots,1)$.  In either case, $v$ will be burned by the end of round $r$.

We next consider the lower bounds.  For the Staller-start game, Proposition~\ref{prop:trivial-bounds} and Alon's result in \cite{alon1992transmitting} together yield 
\[\bg'(Q_n) \ge b(Q_n) = \ceil{\frac{n}{2}}+1,\]
as desired.  Similarly, for the Burner-start game, we have 
\[\bg(Q_n) \ge b(Q_n) = \ceil{\frac{n}{2}}+1;\]
when $n$ is odd, this establishes the desired lower bound on $\bg(Q_n)$ (since in this case $\ceil{\frac{n+1}{2}} = \ceil{\frac{n}{2}}$).  Finally, suppose that $n$ is even, and let $n=2k$; we give a strategy for Staller to ensure that the graph cannot be fully burned before round $k+2$.

Without loss of generality, we may suppose that Burner plays vertex $v_1 = (0,0,0,\dots,0)$ in round 1.  In round 2, Staller plays $v_2 = (1,1,0,\dots,0)$.  If $n=4$ then it is easily seen that the graph cannot be fully burned by the end of round 3, 
so suppose $n \ge 6$.  Let $v_3$ denote the vertex played by Burner in round 3.  In round 4, Staller plays any vertex in which exactly two of the first three coordinates are 1s, as are exactly two of the last $n-3$.  The number of such vertices in $Q_n$ is $3\binom{n-3}{2}$.  To see that some such vertex has not yet been burned, note that there are no such vertices within distance 3 of $v_1$, exactly $\binom{n-3}{2}$ within distance 2 of $v_2$, and at most $n-3$ within distance 1 of $v_3$; hence at least one vertex of this form remains unburned provided that 
\[3\binom{n-3}{2} > \binom{n-3}{2}+n-3,\]   
which holds whenever $n \ge 6$.  For simplicity, suppose that $v_4 = (1, 0, 1, 1, 1, 0, \dots, 0)$; the other cases are similar.  

Define a subgraph $H$ of $Q_n$ as follows: if the first three coordinates of $v_3$ are not all 1, then $H$ consists of all vertices of $Q_n$ in which the first three coordinates are 1; otherwise, $H$ consists of all vertices of $Q_n$ in which the first three coordinates are 0, 1, and 1, in that order.  In either case, note that $H$ is isomorphic to $Q_{n-3}$.  We claim that at the end of round $k+1$, at least one vertex of $H$ will remain unburned.  To see this, let us first take stock of the status of the game at the end of round 4.  All vertices within distance 3 of $v_1$ have burned, as have all vertices within distance 2 of $v_2$, all vertices within distance 1 of $v_3$, and vertex $v_4$.  Now let us restrict our attention to the burned vertices in $H$.  By choice of $H$, no $v_i$ belongs to $H$.  The only burned vertices of $H$ within distance 3 of $v_1$ or distance 2 of $v_2$ are the ``all-zeroes'' vertex of $H$ and (possibly) its neighbors; since $v_3$ does not agree with the vertices of $H$ in the first three coordinates, at most one vertex of $H$ is within distance 1 of $v_3$; and $v_4$ is not in $H$.  In any case, focusing exclusively on $H$, letting $y$ be the ``all-zeroes'' vertex of $H$ and letting $z$ be the unique vertex of $H$ closest to $v_3$, all burned vertices of $H$ belong to $N_H[y] \cup \{z\}$.  

Moreover, note that $y$ is the unique vertex of $H$ closest to both $v_1$ and $v_2$, while $z$ is the unique vertex of $H$ closest to $v_3$, and the unique vertex of $H$ closest to $v_4$ is either $(1, 1, 1, 1, 1, 0, \dots, 0)$ or $(0, 1, 1, 1, 1, 0, \dots, 0)$, depending on how $H$ was chosen.

We claim that it will take at least another $\bg(H \vert (N_H[y] \cup \{z\}))$ rounds to burn all vertices of $H$.  As noted above, after the first four rounds of the game, all burned vertices of $H$ belong to $N_H[y] \cup \{z\}$.  Thus, the claim will follow by the Continuation Principle (Theorem \ref{thm:continuation-principle}), provided we can show that the vertices of $G-H$ have no impact on the number of additional rounds needed to burn all of $H$. 
To see this, first note that for any vertex $v$ of $H$, some shortest path to $v$ from $v_1$ or $v_2$ must pass through $y$, while some shortest path to $v$ from $v_3$ must pass through $z$; consequently, fire that spreads to $v$ from $v_1, v_2$, or $v_3$ can be viewed as spreading through $y$ or $z$.  Fire that spreads to $v$ from $v_4$ can be viewed as spreading through either $(1, 1, 1, 1, 1, 0, \dots, 0)$ or $(0, 1, 1, 1, 1, 0, \dots, 0)$ -- depending on how $H$ was chosen -- but this vertex will burn due to proximity to $v_2$ at the same time it will burn due to proximity from $v_4$, so the fire at $v_2$ does not cause any additional vertices in $H$ to burn beyond those that would already burn on account of $v_4$.  As such, we may safely ``ignore'' all vertices outside of $H$ that were burned within the first four rounds of the game.  Similarly, to minimize the number of rounds needed to burn all of $H$, there is no advantage to playing outside of $H$: instead of playing some vertex $w$ in $G-H$, it would be at least as effective to play the unique vertex of $H$ closest to $w$.  It now follows that even if the players focus solely on $H$, while ignoring the vertices of $G-H$, it will still take at least $\bg(H \vert (N_H[y] \cup \{z\}))$ more rounds to accomplish the task.

Since $H$ is isomorphic to $Q_{n-3}$, the game on $H$ relative to $N_H[y] \cup \{z\}$ can be viewed as a game-in-progress on $Q_{n-3}$, in which two moves have already been played -- the first on $y$ and the second on $z$.  By Alon's result, any burning process on $Q_{n-3}$ must last at least $\ceil{\frac{n-3}{2}}+1$ rounds; thus
\[\bg(H \vert (N_H[y] \cup \{z\})) \ge b(Q_{n-3}) - 2 \ge \ceil{\frac{n-3}{2}}-1 = \ceil{\frac{2k-3}{2}}-1 = k-2,\]
so at least another $k-2$ rounds are needed to burn the rest of $H$.  Since four rounds have already elapsed in the game on $G$, the total length of the game must be at least $k-2+4$, i.e. $k+2$, as claimed.
\end{proof}

We next turn our attention to corona products. We establish bounds for both versions of the burning game in terms of the burning number of the square of the first factor.

\begin{theorem}
\label{corona-product}
Let $G$ and $H$ be connected graphs. Then
$$2b(G^2)-2 \leq \bg(G \circ H) \leq 2b(G^2),$$
where equality in the lower bound requires $H = K_1$.  Additionally,
$$2b(G^2)-3 \leq \bg'(G \circ H) \leq 2b(G^2)+1,$$
where once again, if $H \not = K_1$, we cannot have equality in the lower bound.
\end{theorem}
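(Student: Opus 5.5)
The upper bounds come from an explicit Burner strategy modelled on Proposition~\ref{prop:trivial-bounds}. The lower bounds come from a Staller strategy that makes her moves useless, followed by a projection of the game onto a burning process on $G^2$.

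\textbf{Upper bounds.} Let $k=b(G^2)$ and let $x_1,\dots,x_k$ be an optimal source sequence for the burning process on $G^2$, with each $x_i$ taken in the base copy of $G$ inside $G\circ H$. Burner plays $x_i$ on his $i$th turn, or any legal vertex if $x_i$ is already burning.
\begin{itemize}
\item In the Burner-start game, $x_i$ burns by round $2i-1$. Every $v\in V(G)$ is within $G$-distance $2(k-i)$ of some $x_i$, so $v$ burns by round $2k-1$, exactly as in the proof of Proposition~\ref{prop:trivial-bounds}.
\item Every vertex of the copy of $H$ attached to $v$ is adjacent to $v$. So all copies of $H$ burn by round $2k$, giving $\bg(G\circ H)\le 2b(G^2)$.
\item In the Staller-start game each $x_i$ is delayed by one round, giving $\bg'(G\circ H)\le 2b(G^2)+1$.
\end{itemize}

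\textbf{Lower bounds: Staller's strategy.} Staller tries to play only vertices whose burning has no effect on how fast $V(G)$ burns. Concretely, on each turn she plays an unburned vertex in a copy of $H$ whose root $g\in V(G)$ is already burning. Such a vertex can only spread fire inside its own copy of $H$ and to $g$, which is already burning, so it contributes nothing. The hypothesis $H\neq K_1$ is what keeps such moves available:
\begin{itemize}
\item If Burner has just played a vertex $g\in V(G)$, then in the next spreading phase the neighbours of $g$ start burning. Their copies of $H$ are still unburned on Staller's turn, so she plays inside one of them.
\item If Burner has just played a vertex of a copy of $H$ with root $g$, then when $H\neq K_1$ the copy attached to $g$ still contains unburned vertices after the spreading phase, or else a neighbour-copy does.
\item If no such move exists, I expect the game to end within one or two rounds. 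That residual case is where the additive constants $-2$ and $-3$ come from.
\end{itemize}

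\textbf{Lower bounds: projection onto $G^2$.} Suppose the game ends at round $T$. Every copy of $H$ must be burned by round $T$. Under Staller's strategy no Staller source lies in a copy whose root is unburned, so every $g\in V(G)$ must be burning by round $T-1$, apart from an exceptional case in the final round.
\begin{itemize}
\item Project each Burner source to its root in $G$. A source played in round $2i-1$ yields a $G$-vertex burning by round $2i$, or by $2i-1$ if it was played in $G$.
\item That vertex covers a $G$-ball of radius about $T-2i$, which is a $G^2$-ball of radius $\lceil (T-2i)/2\rceil=\lceil T/2\rceil-i$.
\item Hence the projected Burner sources, used in order, form a burning sequence of $G^2$ of length about $\lceil T/2\rceil+1$. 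This gives $b(G^2)\le T/2+1$, i.e.\ $T\ge 2b(G^2)-2$.
\item The Staller-start version is the same computation with every Burner source shifted one round later, giving $T\ge 2b(G^2)-3$.
\end{itemize}

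\textbf{Main obstacle.} The hard part is the off-by-one bookkeeping. I must pin down exactly when a projected source starts burning, namely in round $2i-1$ if played in $G$ and in round $2i$ otherwise. I must also handle the final rounds when Staller has run out of useless moves, and check that these boundary effects cost at most the stated constant. For the equality clause, when $H\neq K_1$ one extra round is forced: the last copies of $H$ need one round beyond the last $G$-vertex to burn, and Staller can always waste her move. I would show this sharpens the count by one. When $H=K_1$, $G\circ H$ is $G$ with a pendant leaf at each vertex, and I would verify that the bound $2b(G^2)-2$ can actually be reached.
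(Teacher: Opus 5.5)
You have the upper bounds exactly as the paper does. Your Staller strategy and projection onto $G^2$ are also the paper's idea for the lower bounds. However, two steps fail as written.

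\textbf{Staller's wasting move may not exist.} You never show that the move your strategy calls for is always available, and your case analysis does not deliver it. Suppose Burner opens with a universal vertex of $H_g$, the copy of $H$ attached to $g$ (take $H=K_2$). After the spreading phase of round $2$, only $g$ and $H_g$ are burning, so no copy with a burning root has an unburned vertex. Your ``neighbour-copy'' then has an \emph{unburned} root. That breaks the premise of your projection that no Staller source lies in such a copy. Your fallback, that the game then ends within one or two rounds, is false when $G$ is large. Availability also has nothing to do with $H\neq K_1$, since $2b(G^2)-2$ must hold for $H=K_1$ too.

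The paper closes this with a dominance reduction. For $w\in V(H_g)$ we have $N[w]\subseteq N[g]$, and a move in a copy whose root already burns is worth no more than any unburned vertex of $G$. So by the Continuation Principle, Burner may be assumed to play in $G$ while $G$ is not fully burned. Then, by connectivity, every spreading phase ignites a new vertex of $G$ whose copy of $H$ is completely unburned, and Staller plays there.

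Alternatively, you could keep your delayed-source bookkeeping and let Staller play in a copy $H_u$ where $u$ is unburned but has a burning neighbour in $G$. Such a copy is completely unburned and exists whenever $G$ is partly but not fully burned. The move cannot accelerate the fire in $G$, because $u$ ignites in the next spreading phase anyway.

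\textbf{The Staller-start lower bound is not derived.} In that game Staller's first move cannot be wasted, since nothing is burning yet; it necessarily ignites a vertex of $G$ by round $2$. If you redo your count with all Staller moves useless and Burner's sources at rounds $2,4,\dots$, you get $T\ge 2b(G^2)-1$, not $2b(G^2)-3$. That bound is false: for $H\neq K_1$, the paper's example $C_{17}\circ H$ has $\bg'=6$ while $b(C_{17}^2)=4$.

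The correct count includes the forced opening source. If Staller opens in a copy of $H$, the sources of fire in $G$ ignite at rounds $2,2,4,6,\dots$. Let $k=b(G^2)$. If $G$ were fully burned at the end of round $2k-4$, this would give a burning sequence of $G^2$ of length $k-1$. So $G$ is not fully burned before round $2k-3$, which is a Staller turn. If $H\neq K_1$, she wastes that move in a freshly reached copy, forcing round $2k-2$.

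The paper avoids this count entirely. It proves only the $\bg$ bounds and invokes $|\bg(X)-\bg'(X)|\le 1$ (\cite[Proposition~5]{burning-1}). That yields both $\bg'$ bounds, and their strictness for $H\neq K_1$, at once.

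Finally, you do not need to show that $2b(G^2)-2$ is attained when $H=K_1$. The statement only says that equality forces $H=K_1$.
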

\begin{proof}
We prove the bounds on $\bg(G \circ H)$; the bounds on $\bg'(G \circ H)$ then follow using \cite[Proposition 5]{burning-1}, which states that for any graph, $\bg$ and $\bg'$ differ by at most 1.

Consider the Burner-start game on $G \circ H$.  For the upper bound, consider an optimal burning sequence $v_1, v_2, \dots, v_k$ for $G^2$. 
Burner plays $v_1, v_2, \dots, v_k$ in order on his first $k$ turns.  (If any of these vertices burns before Burner plays it, he plays any legal move instead.)  By the arguments used in the proof of Proposition \ref{prop:trivial-bounds}, Burner's moves ensure that all of $G$ will burn within $2b(G^2)-1$ rounds. Because of the structure of $G\circ H$, within one more round, all copies of $H$ will burn as well. In the Staller-start game, the bound is larger by 1 due to the possibility that Staller may play an unhelpful first move.

For the lower bound, we give a strategy for Staller.  First, note that we may assume that Burner plays in $G$ whenever possible.  Given any vertex $v$ in $G$ and any vertex $w$ in the corresponding copy of $H$, we have $N_{G \circ H}[v] \supset N_{G \circ H}[w]$, so by the Continuation Principle, playing $v$ is always at least as good for Burner as playing $w$.  If Burner is considering playing a vertex $w$ in some copy of $H$ for which the corresponding vertex $v$ of $G$ has already burned, then playing any unburned vertex of $G$ will be at least as good, since no matter what, all vertices in the copy of $H$ will burn during the next spreading phase on account of being adjacent to $v$.  Thus, we may assume that Burner never plays in a copy of $H$ unless there is no other choice. 

Now, we explain Staller's strategy.  In her first turn, Staller plays a vertex from any copy of $H$; in all subsequent turns, she plays any unburned vertex from a copy of $H$ where the corresponding vertex of $G$ has already burned.  If all vertices of $G$ have already burned, then this is clearly possible.  If instead not all vertices of $G$ have burned, then since $G$ is connected, fire must have spread to at least one new vertex $v$ of $G$ in the last spreading phase; the corresponding copy of $H$ cannot yet have any burned vertices, since otherwise $v$ would already be burning.  

Using this strategy, Staller only plays in copies of $H$ for which the corresponding vertex of $G$ has already burned.  Hence, she does not contribute to the burning of $G$. In the Burner-start game, arguments similar to those used to prove the upper bound in Proposition \ref{prop:trivial-bounds} show that, under an optimal strategy, Burner's moves must correspond to a burning sequence for $b(G^2)$.  Hence, after Burner's $(b(G^2)-1)$th turn -- which happens in round $2b(G^2)-3$ of the Burner-start game on $G \circ H$ -- not all of $G$ can yet be burning.  Thus it takes at least $2b(G^2)-2$ rounds to burn all of $G$, and hence at least this many rounds to burn $G \circ H$.  Note that if $H$ has at least two vertices, then there will be at least two unburned vertices after the spreading phase of round $2b(G^2)-2$, so in fact the game will last for at least $2b(G^2)-1$ rounds.
\end{proof}

\medskip

We next discuss the sharpness of the bounds in
Theorem~\ref{corona-product}. The following examples show that all of the bounds, with the possible exception of the lower bound for the Staller-start game, are sharp.

\begin{example}\label{corona-tightness}
We first consider the Burner-start game on $G \circ H$.

For the lower bound of $2b(G^2)-2$, let $G = K_2$ and $H = K_1$; it is easily checked that $b_g(G \circ H) = b_g(P_4) = 2$, while $b(G^2) = 2$. As noted in Theorem~\ref{corona-product}, if $H \not = K_1$, then we have an improved lower bound of $2b(G^2)-1$.  To see that this bound is tight, let $G$ be any graph on at least three vertices with a universal vertex $v$, let $H$ be any connected graph, and consider the Burner-start burning game on $G\circ H$.  Note that in this case $b(G^2)=2$. Burner can start the game by burning $v$, which will ensure that fire spreads to all vertices of $G$ in the spreading phase of round $2$.  Since $G$ has at least three vertices, at least two copies of $H$ will be completely unburned prior to Staller's move in round 2.  No matter how Staller plays, the rest of the graph will burn during the spreading phase of round 3.

For the upper bound, let $G$ be the subdivided star with central vertex $v$ and at least two leaves, and $H\cong K_1$. Note that $b(G^2)=2$, since $v$ is universal in $G^2$. Burner will start the game by burning $v$.  After the spreading phase of round 2, all subdivision vertices in $G$ are burning, so by the Continuation Principle, it is optimal for Staller to play in any copy of $H$ joined to a subdivision vertex (since all such vertices would burn in the next round regardless of how Staller plays).  $G$ will finish burning during the spreading phase of round $3$.  Since $G$ has at least two leaves, at least two copies of $H$ remain completely unburned and Burner cannot burn both in the selection phase of round $3$, so the game will last at least until round 4 (and, indeed, will necessarily end after the spreading phase of round 4).

\medskip

Now consider the Staller-start burning game on $G\circ H$.

For the lower bound when $H \not = K_1$, let $G = C_{17}$ and let $H$ be any connected graph.  It is easily checked that $b(G^2) = 4$. For the game on $G \circ H$, no matter what Staller's first move is, at least one vertex of $G$ will burn during the spreading phase of round 2.  It is straightforward to check that Burner can ensure that the last vertices of $G$ burn during the spreading phase of round 5, so that the remainder of the graph burns no later than the spreading phase of round 6.


For the upper bound, let $G$ be $P_5$ on vertex set $\{v_1, \dots, v_5\}$ with an extra pendant vertex $v'_2$ attached to $v_2$, and let $H = K_2$. Note that $b(G^2)=2$, since $v_3$ is universal in $G^2$. Staller will start the game by burning a vertex in the copy of $H$ adjacent to $v'_2$. In the spreading phase of round $2$ the fire will spread only to the other vertex of that copy of $H$ and to vertex $v'_2$, and no matter what Burner chooses to burn in round $2$, after the spreading phase of round $3$, either $v_1$ or $v_5$ and their entire copy of $H$ will still be unburned.  By the Continuation Principle, it is optimal for Staller to play anywhere in a copy of $H$ adjacent to $v_2$ or $v_4$, since all such vertices will burn during the next spreading phase anyway. In the spreading phase of round $4$, fire spreads to the last unburned vertex of $G$, but the attached copy of $H$ will still be unburned. Since $H$ has two vertices, Burner can't burn both, and we need a fifth round to fully burn the graph, thus attaining the upper bound.
\end{example}

\medskip

Thus, the examples above settle the sharpness of all bounds in
Theorem~\ref{corona-product}, except for the lower bound in the Staller-start game. For this bound, Theorem~\ref{corona-product} gives $b'_g(G\circ H)\ge 2b(G^2)-3$, with equality possible only when $H=K_1$. We do not know any example attaining this bound and, in fact, we believe that it is never attained. We can prove this for the first few values of $b(G^2)$.

\begin{proposition}\label{GcircK1-lower}
Let $G$ be a connected graph such that $b(G^2)\le 3$. Then
$$b'_g(G\circ K_1)\ge 2b(G^2)-2.$$
\end{proposition}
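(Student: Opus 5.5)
Since $b(G^2)\le 3$, the claimed bound is $b'_g(G\circ K_1)\ge 2b(G^2)-2\in\{0,2,4\}$. We split into the three possible values of $b(G^2)$.

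If $b(G^2)=1$, then $G=K_1$ and the bound is trivial. If $b(G^2)=2$, the bound says $b'_g(G\circ K_1)\ge 2$. This is immediate: $G\circ K_1$ has at least two vertices, so by Proposition~\ref{prop:characterize-1-2} its Staller-start game burning number is not $1$.

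The only real work is $b(G^2)=3$, where we must show $b'_g(G\circ K_1)\ge 4$. That is, Staller must prevent the whole graph from burning by the end of round $3$. In the Staller-start game on $G\circ K_1$, the sources in rounds $1,2,3$ are chosen by Staller, Burner and Staller respectively. A vertex burns by the end of round $3$ only if it is within distance $2$ of the round-1 source, within distance $1$ of the round-2 source, or is the round-3 source. (The round-3 source gets no spreading phase before the game must end.) Staller's plan:
\begin{itemize}
\item In round 1, play a leaf $\ell_v$ of the corona, attached to some $v\in V(G)$. This leaf burns only its own neighbour $v$ in round $2$ and reaches only $N_G[v]$ by round $3$. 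On $G$ itself this acts like a source played one round later.
\item After Burner's round-2 move $x$, the set burned by the end of round $3$, before Staller's last move, is contained in $N_G[v]\cup\{\ell_v\}\cup N_{G\circ K_1}[x]$. In $G\circ K_1$ this set covers at most the leaves attached to vertices in $\{v\}\cup\{x\}$ (or $x$'s own base vertex, if $x$ is a leaf).
\end{itemize}

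Completing the graph by round $3$ then requires that at most one vertex remain unburned after the spreading phase of round $3$, which Staller's round-3 move can burn. Count the leaves: at most about two leaves are burned by spreading, so if $|V(G)|\ge 4$ at least two leaves stay unburned and the game reaches round $4$.

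The main obstacle is the small cases, where $|V(G)|\le 3$ but $b(G^2)=3$. Since $G$ is connected, such a $G$ has diameter at most $2$, so $G^2$ is complete and $b(G^2)\le 2$. Hence these cases never occur and the leaf count suffices.

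I would double-check the leaf count carefully, as this is the delicate step. The leaves burned by the end of round $3$ come from three sources:
\begin{itemize}
\item $\ell_v$ itself;
\item leaves of vertices that burned in round $2$ and then spread in round $3$: only $v$ (from $\ell_v$) and $x$ if $x\in V(G)$;
\item the leaf of $x$'s base vertex, if $x$ is a leaf.
\end{itemize}
So at most two leaves are burned after spreading in round $3$. One more can be the round-3 source. Thus $|V(G)|\ge 4$ leaves ensure some leaf is unburned at the end of round $3$.
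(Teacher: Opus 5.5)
Your proof is correct and follows the paper's argument. Both use the same split on $b(G^2)$, Staller opens on a pendant vertex, and the key count is that after the spreading phase of round $3$ at most two pendant vertices are burning, with $|V(G)|\ge 4$ forced by $b(G^2)=3$. The one difference is that you handle a Burner reply on a pendant vertex directly, where the paper assumes without loss of generality that Burner plays in $G$; this makes your version slightly more self-contained.
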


\begin{proof}
If $b(G^2)=1$, then the result is trivial, since $2b(G^2)-2=0$. Suppose now that $b(G^2)=2$. Since $b'_g(G\circ K_1)\ge 2$, we immediately obtain $b'_g(G\circ K_1)\ge 2=2b(G^2)-2$.

It remains to consider the case $b(G^2)=3$. For every $x\in V(G)$, let $x'$ denote the pendant vertex adjacent to $x$ in $G\circ K_1$. Staller starts by playing an arbitrary pendant vertex $v'$. Hence, in the spreading phase of round $2$, the vertex $v$ becomes burned. We may assume that Burner then plays a vertex $u\in V(G)$, $u \neq v$, since playing in $G$ is at least as effective as playing a pendant vertex.

Suppose, to the contrary, that the game ends in round $3$. Before the spreading phase of round $3$, the only burned vertices of $G$ are $v$ and $u$. Hence, during the spreading phase of round $3$, the fire may spread from $v$ and $u$ to other vertices of $G$, possibly even burning all remaining vertices of $G$. However, if $x\in V(G)\setminus\{u,v\}$ becomes burned in this spreading phase, then its pendant vertex $x'$ cannot become burned in the same spreading phase. Therefore, after the spreading phase of round $3$, every pendant vertex $x'$ with $x\in V(G)\setminus\{u,v\}$ is still unburned. Since $b(G^2)=3$ and $G$ is connected, we have $|V(G)|\ge 4$; otherwise, $G^2$ would be complete and hence $b(G^2)\le 2$. Thus, at least $|V(G)|-2\ge 2$ pendant vertices are still unburned after the spreading phase of round $3$. Staller can select at most one of these vertices in the selection phase of round $3$. Hence, at least one vertex remains unburned, and the game cannot end in round $3$. Therefore,
$b'_g(G\circ K_1)\ge 4=2b(G^2)-2$.
\end{proof}

Proposition~\ref{GcircK1-lower} shows that the lower bound for the
Staller-start game can be improved for small values of $b(G^2)$. Together with the fact that we do not know any example attaining the lower bound from Theorem~\ref{corona-product}, this suggests that the stronger bound may hold in general. Since the case $H\neq K_1$ already follows from Theorem~\ref{corona-product}, it remains only to consider $H=K_1$. We therefore propose the following conjecture.

\begin{conjecture}
\label{corona-conjecture}
For all connected graphs $G$ and $H$,
$$b'_g(G\circ H)\ge 2b(G^2)-2.$$
\end{conjecture}

\medskip

Lastly, we consider the lexicographic product. We establish bounds for both versions of the burning game, distinguishing between the cases when the second factor has a universal vertex and when it does not.

\begin{theorem}
    \label{lexicographic-product}
    Let $G$ and $H$ be connected graphs with $|V(G)|\geq 2$. If $H$ has a universal vertex, then $$\bg(G) \leq \bg(G[H]) \leq \bg(G) + 1$$
    and $$\bg'(G) \leq \bg'(G[H]) \leq \bg'(G) + 1.$$
    

    If $H$ does not have a universal vertex, then $$2 b(G^2) - 2 \leq \bg(G[H]) \leq 2 b(G^2)$$
    and $$2 b(G^2) - 3 \leq \bg'(G[H]) \leq 2 b(G^2) + 1.$$

\end{theorem}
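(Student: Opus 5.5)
We split the argument according to whether $H$ has a universal vertex.

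\textbf{Case: $H$ has a universal vertex $h$.} For the lower bounds, we use Theorem~\ref{thm:retract}. The projection $(u,v)\mapsto u$ is not a homomorphism of $G[H]$ onto a subgraph, since edges inside an $H$-layer would collapse to a loop. Instead we take $\varphi(u,v)=(u,h)$. This maps onto the $G$-layer $G^h\cong G$ and fixes $G^h$ pointwise. Edges of type $ux\in E(G)$ map to $(u,h)(x,h)$, which is an edge. An edge inside $^uH$ would collapse to a single vertex, so $\varphi$ is not a homomorphism in the strict sense. We therefore rerun the proof of Theorem~\ref{thm:retract} for this ``weak retraction'', where adjacent vertices map to adjacent or equal vertices. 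The spreading step still works: if $v$ catches fire from $w$ and $\varphi(v)=\varphi(w)$, then $\varphi(v)$ was already burning. This gives $\bg(G)\le\bg(G[H])$ and $\bg'(G)\le\bg'(G[H])$.

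For the upper bound, Burner copies an optimal strategy on $G$ by playing $(u,h)$ whenever the imagined move is $u$. When Staller plays $(x,y)$, Burner imagines she played $x$ in $G$. If $x$ is already burned in the imagined game, he imagines an arbitrary legal move, which is fine by the Continuation Principle. We maintain the invariant that whenever $x$ burns in the imagined game at round $t$, the vertex $(x,h)$ or some vertex of $^xH$ is burning by round $t$ in $G[H]$. Spreading preserves this, because a burning vertex of $^xH$ ignites all of $^{x'}H$ for every $x'\in N_G(x)$. Once all of $G$ is burned in the imagined game, every layer contains a burning vertex. One more spreading phase then finishes every layer: the layers of neighbours get fully ignited, and each layer $^xH$ itself is adjacent to a neighbouring layer that contains fire, since $|V(G)|\ge2$ and $G$ is connected. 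This gives the $+1$.

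\textbf{Case: $H$ has no universal vertex.} We mimic Theorem~\ref{corona-product}. For the upper bound, Burner plays $(x_i,h_0)$ for an optimal burning sequence $x_1,\dots,x_k$ of $G^2$. As in Proposition~\ref{prop:trivial-bounds}, every layer has a burning vertex by round $2k-1$. One more round then burns everything, since each layer is adjacent to a burning layer. The Staller-start bound gains $+1$ as usual.

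For the lower bound, we design a Staller strategy that makes her moves useless. She always plays inside a layer $^xH$ that already contains fire from a neighbouring layer, or on her first move anywhere. The key point is that once $^xH$ is reached from a neighbouring layer, the whole layer ignites simultaneously, so her move adds nothing at the level of layers. We then track the set of layers that contain fire. Burner's moves each ignite one layer, and layer fire spreads like distance in $G$ but with a lag: a layer ignited by a single selected vertex $(x,y)$ has only $N_H[y]\neq V(^xH)$ burning, because $y$ is not universal. Because of this, fire needs roughly two rounds per step of $G$, so the relevant process is burning on $G^2$, and Burner's sources must form a burning sequence of $G^2$. This yields $2b(G^2)-2$. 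The Staller-start bound $2b(G^2)-3$ then follows from \cite[Proposition 5]{burning-1}.

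\textbf{Main obstacle.} The hardest step is making the lower bound in the second case rigorous. The factor-$2$ slowdown has to be justified carefully: a partially burned layer spreads fully to neighbouring layers in one step. So the real slowdown comes only from Staller wasting half the moves, not from the layer structure. The plan is to reduce to the corona-style counting, in which layers reached only by spread are ``fully'' burning and Staller's moves never start new fire regions. We must also check that Staller always has such a move available, using the connectedness of $G$ exactly as in Theorem~\ref{corona-product}.
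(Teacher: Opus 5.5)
Your lower bound $\bg(G)\le\bg(G[H])$ (and its Staller-start analogue) via $(u,v)\mapsto(u,h)$ is correct, and it takes a different, more careful route than the paper's projection sketch. Rerunning the proof of Theorem~\ref{thm:retract} for a map that sends edges to edges or to single vertices works exactly as you say, and $h$ need not even be universal there. Your upper bounds for $H$ without a universal vertex coincide with the paper's. Two other steps, however, have genuine gaps.

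\textbf{The upper bound $\bg(G[H])\le\bg(G)+1$.} You dismiss the case where Staller plays $(x,y)$ with $x$ already burned in the imagined game by appeal to the Continuation Principle, but that does not work. Imagining an arbitrary extra Staller move $z$ makes $z$ burned on $G$ although ${}^zH$ may contain no fire. Your invariant then fails, and the imagined game runs ahead of the real one. Nor can Staller simply pass on $G$, since Burner's optimal strategy there gives no guarantee against passes. Your argument never uses that $h$ is universal, and without that hypothesis the bad case does occur: Staller answers inside the layer Burner just played.

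You need the paper's stronger invariant: immediately before each Staller selection, every vertex burned in the imagined game has its \emph{entire} layer burned in $G[H]$. This holds for three reasons. First, $(u,h)$ burns all of ${}^uH$ in the next spreading phase, since $h$ is universal. Second, fire passing between layers always burns whole layers. Third, after a Staller move $(x,y)$, all layers ${}^{x'}H$ with $x'\in N_G(x)$ ignite (such $x'$ exist since $G$ is connected with $|V(G)|\ge 2$), and they then burn the rest of ${}^xH$ before her next turn. Hence every Staller move lies in a layer whose $G$-vertex is unburned, and it transfers to a legal move on $G$.

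\textbf{The lower bound $2b(G^2)-2$.} Your Staller strategy fails. A layer that fire has reached from a neighbouring layer is already completely burned, so there is nothing to play in it. If you mean a layer about to be reached, a move there ignites its $G$-neighbours one round early, exactly like a frontier move in $G$ itself, so it is not wasted. The corona analogy breaks because copies of $H$ in $G\circ H$ are pendant and pass fire to nothing new, whereas the layers of $G[H]$ are what carries the fire. The ``lag'' explanation is false, as you concede, and nothing is put in its place.

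The missing idea, which is where the absence of a universal vertex is used, is that Staller answers inside the layer ${}^uH$ where Burner has just played $(u,v)$. If ${}^uH$ had no fire and no neighbouring layer was burning, then after the next spreading phase only $\{u\}\times N_H[v]\neq V({}^uH)$ is burned. So she has a move there, and it creates no new layer containing fire. Each Burner--Staller pair then adds at most one new layer-level source, and fire spreads twice between consecutive Burner turns. So if every layer had fire by round $2k-3$, with $k=b(G^2)$, Burner's moves would yield a burning sequence of length $k-1$ for $G^2$, a contradiction.

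Be careful with the subcase where ${}^uH$ is engulfed because a neighbouring layer was already burning. Burner's move may then still ignite some layers a round early, and your accounting must absorb this.
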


\begin{proof}
First note that fire cannot spread from a vertex $(u,v)$ to $(u',v')$ unless either $uu'\in E(G)$ or $u=u'$, just as in the game on $G$. Consider the projection of the game on $G[H]$ onto $G$, where every selected vertex $(u,v)$ is projected to $u$. Whenever fire spreads between two different copies of $H$ in $G[H]$, the corresponding vertices of $G$ are adjacent. Hence the projected game cannot burn $G$ faster than the corresponding burning game on $G$. Therefore $\bg(G)\leq \bg(G[H])$ and $\bg'(G)\leq \bg'(G[H])$. This establishes the lower bounds in the case where $H$ has a universal vertex.
    
    We next consider the upper bounds, again in the case where $H$ has a universal vertex.  Let $z$ denote the universal vertex in $H$. 
    Burner imagines a game on $G$ and uses an optimal strategy in that game to guide his play on $G[H]$.  On each of his turns, Burner chooses and plays an optimal move $u$ in the game on $G$; in the game on $G[H]$, he then plays the vertex $(u,z)$.  At any given point, let $S$ denote the set of vertices that are burned in the game on $G$, and let $S'$ denote the set of vertices $u$ in $G$ such that in the game on $G[H]$, all vertices in the copy of $H$ corresponding to $u$ have burned; we claim that throughout the game, immediately before the selection phase of each of Staller's turns, we have $S = S'$.  Note that when fire spreads from a vertex $(u,v)$, for every $u' \in N_G(u)$, every vertex in the copy of $H$ corresponding to $u'$ burns.  Moreover, if one player plays a vertex $(u,v)$, then for any $u' \in N_G(u)$, within two turns fire can spread from the copy of $H$ corresponding to $u$ to the copy of $H$ corresponding to $u'$ and then back.  Finally, when Burner burns a vertex $(u,z)$, during the subsequent spreading phase fire spreads to all vertices in the copy of $H$ corresponding to $u$, due to $z$ being a universal vertex in $H$.  Together, all of this implies that immediately prior to any Staller turn, we have $S = S'$.  Thus when Staller burns a vertex $(u',v')$ in the game on $G[H]$, we must have $u' \not \in S'$ and hence $u' \not \in S$, so Burner may imagine that Staller burned $u'$ in the game on $G$.  
    
    It follows that within $\bg(G)$ rounds, the game on $G$ ends; we claim that the game on $G[H]$ can last at most one round longer.  If the last round of the game on $G$ was Burner's turn, then after the spreading phase of the next round -- i.e. immediately prior to the selection phase of Staller's turn -- we will have $S = S' = V(G)$, so the game on $G[H]$ will also end.  Suppose instead that the last round of the game on $G$ was Staller's turn.  If the game ended during the spreading phase, then again $S = S' = V(G)$ and the game on $G[H]$ is over.  Otherwise, let $x$ denote the last vertex played in the game on $G$.  Since the game on $G$ ended during a selection phase, it must be that all neighbors of $x$ are already burning.  Hence, in the game on $G[H]$, during the next spreading phase, fire will spread to the copy of $H$ corresponding to $x$ from the copy of $H$ corresponding to some neighbor of $x$; again the game on $G[H]$ ends at most one turn later than the game on $G$.  In any case, because Burner follows an optimal strategy for the game on $G$, the game on $G[H]$ lasts at most $\bg(G)+1$ rounds.  Thus $\bg(G[H]) \le \bg(G)+1$; a very similar argument shows that $\bg'(G[H]) \le \bg'(G)+1$.


Now suppose that $H$ does not have a universal vertex. We give a strategy for Staller. Suppose that Burner plays a vertex $(u,v)$. If, after the spreading phase of the subsequent round, there is still an unburned vertex $(u,v')$ in the copy of $H$ corresponding to $u$, then Staller plays any such vertex.

Suppose instead that the copy of $H$ corresponding to $u$ is completely burned after this spreading phase. Then Burner's move does not contribute a new source to the spreading of fire between the copies of $H$. Indeed, if this copy contained no burning vertex before Burner's move and no neighboring copy contained a burning vertex immediately before the subsequent spreading phase, then during this spreading phase fire could spread inside the copy corresponding to $u$ only from $(u,v)$. Since $H$ has no universal vertex, some vertex $(u,v')$ would remain unburned, a contradiction. Thus, in this case Staller plays any legal move.

Consequently, in every pair consisting of a Burner turn and the subsequent Staller turn, at most one of the two moves contributes a new source to the spreading of fire between the copies of $H$. If this is Staller's move, we may regard it as having been played at the beginning of the pair of rounds, which can only accelerate the spreading of fire. Moreover, between two consecutive Burner turns fire spreads twice in $G[H]$. Hence, if we only keep track of the copies of $H$ containing a burning vertex, the resulting process is no faster than a burning process on $G^2$ with one new source for each such pair of rounds.

Let $k=b(G^2)$. By the end of round $2k-3$, at most $k-1$ new sources have been introduced in this projected process. If every copy of $H$ were already burning by then, these sources would give a burning process of $G^2$ of length at most $k-1$, contradicting $b(G^2)=k$. Hence at least one copy of $H$ contains no burning vertex after round $2k-3$, and therefore the game on $G[H]$ lasts for at least $2k-2$ rounds. Thus $\bg(G[H])\geq 2b(G^2)-2$. By Proposition 5 in [12], it then follows that $\bg'(G[H])\geq 2b(G^2)-3$.


    For the upper bound, Burner's strategy is as follows.  Identify an optimal sequence of moves $x_1, x_2, \ldots$ for the burning process on $G^2$, and fix any $v \in V(H)$. In the game on $G[H]$, in Burner's $i$th turn, he plays vertex $(x_i,v)$.  (If this vertex is already burning, then Burner plays any legal move.) 
This ensures that the copy of $G$ corresponding to $v$ burns no later than Burner's $b(G^2)$th turn, which happens in round $2b(G^2)-1$ of the Burner-start game and round $2b(G^2)$ of the Staller-start game. Since $G$ is connected and has at least two vertices, every vertex $u\in V(G)$ has a neighbor $u'$. Hence every vertex in the copy of $H$ corresponding to $u$ is adjacent to the burning vertex $(u',v)$. Thus, after one additional spreading phase, all remaining vertices of $G[H]$ are burned. Consequently, $\bg(G[H])\leq 2b(G^2)$ and $\bg'(G[H])\leq 2b(G^2)+1$.
\end{proof}

\begin{example}
The bounds in Theorem~\ref{lexicographic-product} are sharp when $H$ has a universal vertex. It is enough to consider $H=K_2$.

First, let $G=P_3$. Clearly, $b_g(P_3)=b'_g(P_3)=2$. In the Burner-start game on $P_3[K_2]$, Burner starts in the copy corresponding to the middle vertex of $P_3$, and the whole graph burns in round $2$. Hence, $b_g(P_3[K_2])=2=b_g(P_3)$. In the Staller-start game, Staller starts in a copy corresponding to an endvertex of $P_3$. In round $2$, the middle copy burns completely, while the opposite end copy is still unburned. Burner can play only one vertex of this copy, and therefore one more round is necessary. Thus, $b'_g(P_3[K_2])=3=b'_g(P_3)+1$.

Now let $G=P_4$. We have $b_g(P_4)=2$ and $b'_g(P_4)=3$. In the Burner-start game on $P_4[K_2]$, after the spreading phase of round $2$ at least one copy of $K_2$ is still completely unburned. Staller can play one vertex of this copy, but its other vertex remains unburned, so a third round is necessary. On the other hand, Burner can start in a copy corresponding to a middle vertex of $P_4$ and ensure that the whole graph burns by round $3$. Hence, $b_g(P_4[K_2])=3=b_g(P_4)+1$. Finally, in the Staller-start game, Staller can force at least three rounds by starting in a copy corresponding to an endvertex of $P_4$. Conversely, after any first move of Staller, Burner can ensure that all copies are burned by round $3$. Therefore, $b'_g(P_4[K_2])=3=b'_g(P_4)$.

Consequently, all four bounds in the first part of Theorem~\ref{lexicographic-product} are sharp.

\medskip

The bounds in Theorem~\ref{lexicographic-product} are also sharp when $H$ has no universal vertex.

For the lower bound in the Burner-start game, consider $P_7[P_4]$. We have $b(P_7^2)=3$. Let $v_1,\ldots,v_7$ be the vertices of $P_7$ and let $H_i$ denote the copy of $P_4$ corresponding to $v_i$. Burner starts by playing any vertex of $H_4$. In the spreading phase of round $2$, all vertices of $H_3$ and $H_5$ burn. In round $3$, the fire spreads to all vertices of $H_2$ and $H_6$, and in round $4$ to all vertices of $H_1$ and $H_7$. Thus, regardless of Staller's moves, the whole graph is burned by round $4$. By Theorem~\ref{lexicographic-product}, $b_g(P_7[P_4])\ge 2b(P_7^2)-2=4$, and hence $b_g(P_7[P_4])=4=2b(P_7^2)-2$.

For the lower bound in the Staller-start game, consider $C_{16}[P_4]$. We have $b(C_{16}^2)=4$. Label the vertices of $C_{16}$ cyclically by $v_0,\ldots,v_{15}$, and let $H_i$ denote the copy of $P_4$ corresponding to $v_i$. Suppose that Staller starts in $H_0$, which we may assume by symmetry. Burner then plays any vertex of the opposite copy $H_8$ in round $2$. By the spreading phase of round $5$, the fire originating from Staller's first move has reached the nine copies corresponding to vertices at distance at most $4$ from $v_0$ in $C_{16}$. Similarly, the fire originating from Burner's move has reached the seven remaining copies, corresponding to vertices at distance at most $3$ from $v_8$. Hence, the whole graph is burned by round $5$, regardless of Staller's subsequent moves. Theorem~\ref{lexicographic-product} gives $b'_g(C_{16}[P_4])\ge 2b(C_{16}^2)-3=5$, and consequently $b'_g(C_{16}[P_4])=5=2b(C_{16}^2)-3$.

For the upper bound in the Burner-start game, consider $P_6[P_4]$. Since $b(P_6^2)=2$, Theorem~\ref{lexicographic-product} gives $b_g(P_6[P_4])\le 4$. We show that three rounds do not suffice. Let $H_i$ denote the copy of $P_4$ corresponding to $v_i\in V(P_6)$, and suppose that Burner starts in $H_i$. Since $P_4$ has no universal vertex, after the spreading phase of round $2$ there is still an unburned vertex in $H_i$. Staller plays such a vertex, and hence her move does not help the fire to reach any new copy of $P_4$. By the spreading phase of round $3$, the fire originating from Burner's first move can therefore reach only copies $H_j$ for which $d_{P_6}(v_i,v_j)\le 2$. Since for every $v_i\in V(P_6)$ there exists a vertex $v_j$ with $d_{P_6}(v_i,v_j)\ge 3$, the corresponding copy $H_j$ is still completely unburned after the spreading phase of round $3$. Burner can then burn only one vertex of $H_j$ in the selection phase, so the whole graph cannot be burned in round $3$. Hence, $b_g(P_6[P_4])\ge 4$, and consequently $b_g(P_6[P_4])=4=2b(P_6^2)$.

Finally, for the upper bound in the Staller-start game, let $G$ be the tree obtained from three paths of lengths $3$, $2$, and $2$ by identifying one endvertex of each path. Denote their common endvertex by $a$, and denote the three paths by $ab_1b_2b_3$, $ac_1c_2$, and $ad_1d_2$, respectively. Let $H=P_4$. We have $b(G^2)=2$: after choosing $a$ as the first source, all vertices except $b_3$ burn in the next round, and $b_3$ can then be chosen as the second source. We next show that $\bg'(G[P_4])\geq 5$. Staller starts by playing an endvertex in the copy of $P_4$ corresponding to $c_2$. Suppose that Burner plays in round $2$ in the copy corresponding to some vertex $x\in V(G)$. Since $d_G(b_3,d_2)=5$, there exists $z\in\{b_3,d_2\}$ such that $d_G(x,z)\geq 3$. Moreover, $d_G(c_2,b_3)=5$ and $d_G(c_2,d_2)=4$. Hence, after the spreading phase of round $3$, the copy of $P_4$ corresponding to $z$ is still completely unburned. Staller plays an endvertex of this copy in round $3$. After the spreading phase of round $4$, the fire originating from Staller's first move cannot yet have reached this copy, since $d_G(c_2,z)\geq 4$, and the fire originating from Burner's move cannot have reached it either, since $d_G(x,z)\geq 3$. Thus, inside this copy the fire has spread only from the endvertex played by Staller in round $3$. Consequently, at least two vertices in this copy are still unburned after the spreading phase of round $4$. Burner can burn at most one of them in the selection phase, so the game lasts for at least five rounds. By Theorem~\ref{lexicographic-product}, $\bg'(G[P_4])\leq 2b(G^2)+1=5$, and hence $\bg'(G[P_4])=5=2b(G^2)+1$.


Consequently, all four bounds in the second part of Theorem~\ref{lexicographic-product} are sharp.
\end{example}

\section{Open problems}\label{sec:questions}

We conclude with a few open problems that arise naturally from our results and indicate some possible directions for further research on the burning game and its variants.

\medskip

One of the basic questions is to better understand the relation between the burning number, the game burning number, and the cooling number. It was shown in \cite{burning-1} that $b(G)=b_g(G)$ whenever $\diam(G)=2$, while equality need not hold for graphs of diameter $3$. This motivates the following problem.

\begin{problem}
Characterize the connected graphs $G$ for which $b(G)=b_g(G)$, and the graphs for which $b_g(G)=\CL(G)$.
\end{problem}

Another natural question concerns the difference between the Burner-start and Staller-start versions of the game. The two game burning numbers differ by at most one \cite{burning-1}, but little is known about when they are equal.

\begin{problem}
Characterize the graphs $G$ for which $b_g(G)=b'_g(G)$.
\end{problem}

Finally, several questions remain open concerning our bounds for graph products. In particular, the sharpness of some of the bounds for Cartesian and strong products is not known, and it would also be interesting to determine the game burning numbers for classical product families such as grids, cylinders, and tori.

\begin{problem}
Determine the sharpness and characterize the equality cases in the
bounds obtained for graph products in this paper.
\end{problem}

\section{Acknowledgement}
The work was initiated at the 2nd Workshop on Games on Graphs on Rogla in June 2024. No AI was used for this research.

N.C. acknowledges partial support by the Slovenian Research and Innovation Agency (research program P1-0404, and research projects N1-0370, and J1-4008).

V.I.C.\ acknowledges the financial support from the Slovenian Research and Innovation Agency (Z1-50003, J1-70045, P1-0297, N1-0285) and the European Union (ERC, KARST, 101071836).

M.J.\ acknowledges the financial support of the Slovenian Research and Innovation Agency (research core funding No.\ P1-0297 and projects N1-0285, N1-0431.

M.M.\ acknowledges the partial financial support of Ministry of Science, Technological Development and Innovation of Republic of Serbia (451-03-33/2026-03/200125 \& 451-03-34/2026-03/200125).

\bibliographystyle{plain}
\bibliography{referencesBG}
\end{document}